\documentclass[11pt]{article}

\usepackage[letterpaper,margin=1in]{geometry}
\usepackage[T1]{fontenc}
\usepackage{lmodern}
\usepackage{microtype}
\usepackage{authblk}

\usepackage{amsmath}
\usepackage{amssymb,amsthm,bm}
\usepackage{graphicx}
\usepackage{xcolor}
\usepackage{booktabs}
\usepackage{multirow}
\usepackage{diagbox}
\usepackage{soul}
\usepackage{setspace}
\usepackage[font=small,labelfont=bf,labelsep=period]{caption}
\usepackage[title]{appendix}

\usepackage[colorlinks=true]{hyperref}
\definecolor{linkblue}{rgb}{0.0,0.2,0.55}
\hypersetup{
  linkcolor=linkblue,
  citecolor=linkblue,
  urlcolor=linkblue,
  pdftitle={Covariance-Informed Ensemble Score Filtering: Embedding Localized Covariance Geometry into Score-Based Transport},
  pdfauthor={Siming Liang, Peter Jan van Leeuwen, Hristo G. Chipilski, Zezhong Zhang, Feng Bao, Guannan Zhang},
}
\usepackage{apacite}
\let\cite\shortcite   %

\usepackage{algorithm}
\usepackage{algpseudocode}

\makeatletter
\newcommand{\frontheading}[1]{%
  \begin{center}\@endparpenalty\@M
    \bfseries #1\par\nopagebreak\vspace{-.5em}%
  \end{center}\nopagebreak\@beginparpenalty\@M}
\newenvironment{frontblock}[1]
  {\small\frontheading{#1}\quotation}
  {\endquotation}
\newenvironment{plainlanguagesummary}
  {\begin{frontblock}{Plain Language Summary}}
  {\end{frontblock}}
\newcommand{\keypoints}[3]{%
  \begingroup\small\frontheading{Key Points}%
  \begin{list}{\textbullet}{%
      \setlength{\leftmargin}{\dimexpr\leftmargini+1.2em\relax}%
      \setlength{\rightmargin}{\leftmargini}%
      \setlength{\labelwidth}{0.8em}\setlength{\labelsep}{0.4em}%
      \setlength{\itemsep}{1pt}\setlength{\parsep}{0pt}}%
    \if\relax\detokenize{#1}\relax\else\item #1\fi
    \if\relax\detokenize{#2}\relax\else\item #2\fi
    \if\relax\detokenize{#3}\relax\else\item #3\fi
  \end{list}\par\endgroup}
\makeatother
\newcommand{\keywords}[1]{%
  \begin{quotation}\small\noindent\textbf{Keywords:} #1\end{quotation}}

\soulregister\eqref7
\soulregister\ref7
\soulregister\cite7

\newcommand{\R}{\mathbb{R}}
\newcommand{\E}{\mathbb{E}}
\newcommand{\Var}{\operatorname{Var}}

\newcommand{\diag}{\operatorname{diag}}

\newcommand{\N}{\mathcal{N}}
\newcommand{\M}{\mathcal{M}}
\newcommand{\Hcal}{\mathcal{H}}
\newcommand{\I}{\mathbf{I}}
\newcommand{\xbf}{\mathbf{x}^{f}}
\newcommand{\xba}{\mathbf{x}^{a}}
\newcommand{\xb}{\mathbf{x}}
\newcommand{\yb}{\mathbf{y}}
\newcommand{\db}{\mathbf{d}}
\newcommand{\eb}{\boldsymbol{\varepsilon}}
\newcommand{\vb}{\mathbf{v}}
\newcommand{\zb}{\mathbf{z}}
\newcommand{\ub}{\mathbf{u}}
\newcommand{\Cmat}{\mathbf{C}}
\newcommand{\Bmat}{\mathbf{B}}
\newcommand{\Dmat}{\mathbf{D}}
\newcommand{\Kmat}{\mathbf{K}}
\newcommand{\Lmat}{\mathbf{L}}
\newcommand{\Rmat}{\mathbf{R}}
\newcommand{\Hmat}{\mathbf{H}}

\newcommand{\Vmat}{\mathbf{V}}

\newcommand{\fb}{\mathbf{f}}
\newcommand{\scoreb}{\mathbf{s}}
\newcommand{\gb}{\mathbf{g}}
\newcommand{\xbarf}{\overline{\mathbf{x}}^{f}}
\newcommand{\ybarf}{\overline{\mathbf{y}}^{f}}

\theoremstyle{plain}
\newtheorem{proposition}{Proposition}

\theoremstyle{remark}

\begin{document}

\title{Covariance-Informed Ensemble Score Filtering: Embedding Localized Covariance Geometry into Score-Based Transport}

\author[1]{Siming Liang}
\author[2]{Peter Jan van Leeuwen}
\author[3]{Hristo G.~Chipilski}
\author[4]{\\Zezhong Zhang}%
\author[5]{Feng Bao}
\author[1]{Guannan Zhang}
\affil[1]{Computer Science and Mathematics Division, Oak Ridge National Laboratory}
\affil[2]{Department of Atmospheric Science, Colorado State University}
\affil[3]{Department of Scientific Computing, Florida State University}
\affil[4]{Department of Mathematics and Statistics, Auburn University}
\affil[5]{Department of Mathematics, Florida State University}
\date{}

\maketitle

\begin{abstract}
Background-error covariance models are central to atmospheric data assimilation because they spread partial observational information across space, variables, and dynamically coupled scales. Ensemble and variational methods exploit this covariance geometry efficiently, but their standard analysis formulations are largely built around Gaussian or locally quadratic approximations. Score-based ensemble filtering provides a framework for nonlinear distributional transport, but existing Ensemble Score Filter (EnSF) formulations lack an intrinsic covariance-mediated pathway for propagating likelihood information from observed to unobserved state components. We introduce a covariance-informed ensemble score filter (CI-EnSF) that embeds localized forecast-error covariance geometry directly into the forward diffusion and reverse transport. The proposed forward diffusion is covariance-shaped in standardized state space and becomes isotropic under the corresponding control-variable transformation. This covariance structure continuously propagates observational information from observed to unobserved state components throughout the reverse transport analysis trajectory. Experiments with Lorenz--96 and surface quasi-geostrophic (SQG) dynamics show a clear regime dependence. For linear observations, a well-tuned LETKF is more accurate, while CI-EnSF maintains stable cycling and effective covariance-mediated information propagation. Under strongly nonlinear observations and under climatological initialization, CI-EnSF achieves lower analysis error and greater cycling robustness than the Gaussian ensemble benchmarks such as the LETKF, with improvements extending across spatial scales in the nonlinear SQG experiment. These results suggest that the principal benefit of CI-EnSF arises from combining nonlinear distributional transport with the mature covariance geometry used in conventional data assimilation.
\end{abstract}

\keypoints
{Localized covariance geometry is embedded directly into ensemble score filter's reverse transport dynamics for nonlinear data assimilation}
{Cross covariances continuously spread partial observation information to unobserved state components during the score-based transport}
{Nonlinear data assimilation shows improved accuracy, robustness, and stability under challenging observation nonlinearities}

\begin{plainlanguagesummary}
Weather and climate prediction systems combine model forecasts with incomplete observations to estimate the evolving state of the atmosphere and the ocean. Conventional methods use statistical relationships between model variables to spread information from observed locations to places and variables that are not directly measured, but their analysis steps are usually based on approximately Gaussian uncertainty. This study develops a filtering method that keeps this familiar covariance structure while allowing the ensemble to describe a non-Gaussian probability distribution. The covariance structure is built directly into a generative AI-driven transport so that sparse observations can influence unobserved parts of the state throughout the analysis. Tests with an idealized chaotic model and a two-layer surface turbulent flow show that the method retains conventional covariance-based behavior for linear observations and becomes more advantageous when the observation relationship is strongly nonlinear. The results provide a way to combine established data-assimilation covariance modeling with nonlinear probabilistic ensemble transport.
\end{plainlanguagesummary}

\keywords{data assimilation; ensemble score filter; control-variable transform;
background-error covariance; nonlinear filtering; score-based diffusion; partial observations}

\section{Introduction}
\label{sec:intro}

Atmospheric data assimilation combines an imperfect model forecast with incomplete and noisy
observations to estimate the evolving state of the atmosphere \cite{AschEtAl2016,EvensenEtAl2022,KalnayEtAl2024}. The problem is intrinsically
underdetermined in the sense that the number of model degrees of freedom is generally far larger than the number of
observations available at a given analysis time, and many dynamically important variables are only
indirectly observed. Consequently, the ability to propagate information away from observation
locations is as important as the local fit to the observations themselves. In {traditional} variational and
ensemble data assimilation, this role is carried by the forecast- or background-error covariance
structure, which determines how an observed innovation modifies remote, unobserved and
multivariately coupled components of the state \cite{Lorenc1986,Bannister2008a,Bannister2008b}.

Modern atmospheric data-assimilation systems have therefore invested heavily in the representation
and application of covariance geometry. In variational methods, {the covariance is built around physics-informed estimates of spatial correlation length scales and multivariate dynamical balance relations, followed by extensive physics-guided tuning for optimal results.}
In ensemble Kalman filters, short-range forecast ensembles provide flow-dependent
covariances, {fine-tuned via localization and inflation to suppress} sampling artifacts and model errors arising from finite ensembles
\cite{HoutekamerMitchell1998,GaspariCohn1999,HamillEtAl2001,WhitakerHamill2002,Hunt2007,HoutekamerZhang2016}. Hybrid ensemble--variational methods
combine static and flow-dependent covariance information and have become an important route for
using finite ensembles without discarding mature variational covariance models
\cite{HamillSnyder2000,Lorenc2003,Buehner2005}. These developments provide a robust and physically interpretable
second-order geometry for high-dimensional atmospheric inference.

The same success also exposes a limitation. A covariance model, by construction, represents only second-order statistics. Forecast uncertainty generated by nonlinear dynamics can be skewed,
heavy-tailed or multimodal, and nonlinear observation operators can further produce non-Gaussian
analysis distributions that are used in subsequent forecasts \cite{BocquetEtAl2010,KurosawaPoterjoy2021}. In such regimes, forecast distributions with the same mean and covariance
can imply substantially different Bayesian updates. Ensemble Kalman and incremental variational
methods remain highly effective in many operational regimes, but their standard analysis steps do
not attempt to preserve the full shape of a non-Gaussian forecast distribution \cite{Chipilski2025}. This motivates a
complementary question: \emph{can mature atmospheric covariance models
be retained while the analysis itself is performed as nonlinear distributional transport?}

Score-based diffusion methods provide one possible route. Rather than estimating a single analysis
state or applying one Gaussian ensemble transform, a score-based filter evolves particles in an
artificial pseudo-time using gradients of log density. The Ensemble Score Filter
\cite<EnSF; >{Bao2024} was introduced as a training-free score-based method for high-dimensional nonlinear
filtering. It approximates forecast-distribution information from an ensemble and gradually
incorporates likelihood information along a reverse diffusion. The method is attractive for data
assimilation because it avoids offline neural score training and evaluates nonlinear observation
information at the evolving ensemble members. Subsequent EnSF developments have also highlighted a
difficulty that is especially important in geophysical applications: under partial observations,
the raw likelihood information directly constrains only observed variables. Image inpainting
\cite{Liang2025Inpainting} and a two-step linear-regression update \cite{Xiong2026EnSFLR} have
been proposed to reconstruct or update unobserved components. These studies demonstrate both the
promise of score-based assimilation and the importance of an explicit mechanism for
observed-to-unobserved information transfer.

The present work takes a different route. Instead of appending a separate reconstruction or
regression step after score-based updating, we embed data-assimilation covariance geometry directly
into the diffusion and reverse transport.
The proposed covariance-informed Ensemble Score Filter, termed CI-EnSF,
performs its artificial diffusion in standardized state coordinates, with the
localized forecast-error covariance geometry embedded directly into the diffusion
tensor. Under the corresponding control-variable transformation, the diffusion
becomes isotropic in control space while remaining shaped by the prescribed
forecast-error covariance geometry in physical space.
In this way, the same covariance structure used in conventional data
assimilation to spread observational information also governs the direction in which likelihood
information is transported by the score-based analysis.
This construction provides two important properties. First, the covariance structure can be
introduced without requiring an explicit covariance inversion in the prior contribution
to the reverse transport. The resulting formulation therefore preserves an analytical,
training-free prior update while retaining the familiar control-variable geometry of atmospheric
data assimilation. Second, cross-covariances act continuously during the reverse transport, so that
information introduced at observed locations can influence remote and unobserved state components
throughout the analysis trajectory rather than through a separate post-analysis reconstruction.
The method can therefore be viewed as embedding the information-spreading mechanism of ensemble
and variational data assimilation within a distributional posterior-transport framework. {The methodology has similarities with Particle Flow Filters, which aim to transport ensemble members directly from prior to posterior \cite{Hu_vanLeeuwen_2021,Yeh_vanLeeuwen_2026a,Yeh_vanLeeuwen_2026b}.} To ensure robust repeated cycling, we augment CI-EnSF
with two innovation-driven feedback mechanisms. The first adjusts the prior uncertainty scale using
a finite-ensemble-corrected innovation-consistency diagnostic {similar to inflation in Ensemble Kalman filtering}. The second tempers the likelihood
when innovations are large and automatically relaxes that tempering as the filter approaches a
well-calibrated cycling regime. Together, these mechanisms provide separate feedback on prior
spread and likelihood strength without requiring access to the true state.

The contributions of this paper are fourfold. First, we formulate ensemble score filtering in a
data-assimilation control-variable geometry, establishing a direct connection between isotropic
score transport in control space and covariance-shaped transport in physical space. Second, we
derive an analytical member-conditioned transition model in which the covariance operator drops
out of the prior transport while remaining active in the likelihood transport. Third, we
show how cross-covariance structure provides continuous propagation of observational information
from observed to unobserved variables during reverse score transport, thereby linking CI-EnSF to
the central information-spreading mechanism of ensemble and variational data assimilation. Fourth,
we develop innovation-based controllers for prior-spread calibration {to improve robustness in repeated cycling} and likelihood tempering to improve robustness {during spin up}.

The numerical study is organized around data-assimilation questions rather than model
complexity. Lorenz--96 experiments are designed to test consistency in a linear--Gaussian regime,
sparse observations, nonlinear observation operators and adaptive cycling. Surface
quasi-geostrophic experiments then examine spatially structured and cross-component information
propagation in a multiscale turbulent system. In particular, the SQG experiments test whether covariance-informed score transport remains effective in a high-dimensional, multiscale system and whether its relative advantage changes between linear and strongly nonlinear observation regimes.

\section{Data Assimilation Problem and Covariance Geometry}
\label{sec:problem}
The data-assimilation problem considered in this work is to infer a
high-dimensional model state from incomplete and possibly nonlinear observations.
Of particular interest is the common setting in which only a subset of the state
variables is observed through a nonlinear operator, so that an effective analysis must not only fit the
observations but also propagate their information to dynamically and {nonlinearly} related,
unobserved components. We first formulate this Bayesian filtering problem and then
identify the forecast-error covariance geometry that provides this
observation-to-state coupling in conventional data assimilation.

Let \(\xb_k\in\R^n\) denote the model state at analysis time \(t_k\). The discrete forecast model
and observation model are written as
\begin{equation}\label{eq:da}
\begin{aligned}
  \xb_k &= \M_{k}(\xb_{k-1}) + \boldsymbol{\eta}_k, \\
  \yb_k &= \Hcal_k(\xb_k) + \eb_k,
  \qquad \eb_k\sim\N(0,\Rmat_k),
\end{aligned}
\end{equation}
where
\(\boldsymbol{\eta}_k\) represents model error and \(\Rmat_k\) is the specified observation-error covariance. Given observations through time \(t_{k-1}\), the forecast density is denoted by
\(p_k^f(\xb)\). Bayes' rule defines the analysis density as
\begin{equation}
  p_k^a(\xb)
  \propto
  p(\yb_k\mid \xb)\,p_k^f(\xb).
  \label{eq:bayes}
\end{equation}
We represent \(p_k^f\) by a forecast ensemble of size $N_e$, i.e.,
\begin{equation}\label{eq:priorensemble}
  \bm X_k^f = \left\{\xbf_{k,i}\right\}_{i=1}^{N_e},
\end{equation}
and seek an analysis ensemble \(\bm X_k^a\) whose mean, spread and higher-order structure are
consistent with the posterior information represented by Eq.~\eqref{eq:bayes}.
For notational simplicity, we suppress the analysis-cycle subscript \(k\) in Eq.~\eqref{eq:priorensemble} throughout the remainder of the paper and denote the \(i\)th forecast ensemble member by $\xbf_{i}$.

The mechanism by which partial observations constrain the full state is most clearly seen through the forecast-error covariance. A conventional covariance-based analysis uses {either an offline-generated covariance matrix or an} ensemble forecast-error covariance:
\begin{equation}\label{eq:Bmat}
  {\Bmat}^f
  =
  \frac{1}{N_e-1}
  \sum_{i=1}^{N_e}
  \left(\xbf_i-\xbarf\right)
  \left(\xbf_i-\xbarf\right)^{\top},
  \qquad
  \xbarf
  =
  \frac{1}{N_e}\sum_{i=1}^{N_e}\xbf_i .
\end{equation}
where $\xbarf$ is the forecast ensemble mean. When having partial observations, the state vector $\xb$ and the covariance matrix $\Bmat^f$ can be partitioned as
\begin{equation}
  \xb=
  \begin{bmatrix}\xb_o\\ \xb_u\end{bmatrix},
  \qquad
  \Bmat^f=
  \begin{bmatrix}
    \Bmat_{oo} & \Bmat_{ou}\\
    \Bmat_{uo} & \Bmat_{uu}
  \end{bmatrix},
  \label{eq:blockB}
\end{equation}
where \(\xb_o\) and \(\xb_u\) denote observed and unobserved components,
and \(\Bmat_{uo}\) denotes the cross-covariance through which information is
transferred from \(\xb_o\) to \(\xb_u\). {The observed components are the components that are active in the observation operator $\Hcal_k(\xb_k)$.} Since the likelihood depends only
on \(\xb_o\), the gradient of the log likelihood is zero with respect to $\xb_u$, i.e.,
\begin{equation}
  \nabla_{\xb}\log p(\yb\mid\xb)
  =  \begin{bmatrix}\gb_o\\0\end{bmatrix},
\end{equation}
where $\gb_o$ denotes the
gradient of the log likelihood with respect to the observed state components. {When the prior is assumed to be Gaussian, as in variational and Ensemble Kalman methods, the likelihood forcing is proportional to}
\begin{equation}
  \Bmat^f\nabla_{\xb}\log p(\yb\mid\xb)
  =
  \begin{bmatrix}
    \Bmat_{oo}\gb_o\\
    \Bmat_{uo}\gb_o
  \end{bmatrix},
\label{eq:blockpropagation}
\end{equation}
where the cross-covariance \(\Bmat_{uo}\) is the direct pathway through which observed
information constrains unobserved variables.

Eq.~\eqref{eq:blockpropagation} {shows} that the covariance-based information spreading mechanism is essential to conventional data
assimilation. Ensemble Kalman and variational methods exploit the covariance information very effectively, but their standard analysis updates
are built around Gaussian or locally quadratic approximations and therefore
do not explicitly represent the full shape of a nonlinear posterior. In
contrast, score-based filtering provides a framework for nonlinear
distributional transport, but existing isotropic EnSF \cite{Bao2024} formulations do not
contain an intrinsic counterpart of the cross-covariance pathway in
Eq.~\eqref{eq:blockpropagation}. Under partial observations, additional
reconstruction or regression steps are therefore needed to update the unobserved variables. These complementary limitations motivate CI-EnSF. The goal is to retain the mature covariance geometry used for
observation-to-state information propagation while embedding it directly
into a score-based transport capable of representing nonlinear posterior structures.
\section{CI-EnSF: Covariance-Informed Ensemble Score Filtering}
\label{sec:cvtensf}
This section develops CI-EnSF by embedding forecast-error covariance geometry
directly into score-based ensemble transport. We first review the isotropic EnSF
formulation and its limitation under partial observations, and then introduce a
covariance-informed diffusion in standardized coordinates. The resulting formulation
retains the covariance geometry in the likelihood transport while allowing it to
cancel analytically from the prior transport.

\subsection{Overview of the isotropic ensemble score filter}

Score-based filtering introduces an artificial diffusion time \(t\in[0,1]\) that is distinct from
the physical model time. A forward stochastic differential equation (SDE) gradually transforms
the target distribution (e.g., the posterior distribution) into an easy-to-sample reference
distribution,
\begin{equation}
  d\zb_t = \fb(\zb_t,t)\,dt + \mathbf{G}(t)\,d\mathbf{W}_t,
  \label{eq:genericforward}
\end{equation}
with properly chosen drift and diffusion coefficients
\(\fb(\zb_t,t)\) and \(\mathbf{G}(t)\).
The forward SDE defines a family of marginal distributions connecting the target distribution (a.k.a, the prior distribution)
to the reference distribution. Generation or analysis sampling is then performed by initializing
from the reference distribution and integrating the associated reverse SDE:
\begin{equation}
  {d\zb_t}
  = \left[\fb(\zb_t,t)
    +\mathbf{G}(t)\mathbf{G}(t)^{\top}\scoreb_t(\zb_t)\right] dt + \mathbf{G}(t)\,d\overleftarrow{\mathbf{W}}_t,
  \label{eq:spfodegeneric}
\end{equation}
where $\overleftarrow{\mathbf{W}}_t$ is the reverse Brownian motion and $\scoreb_t(\zb_t)=\nabla\log p_t(\zb_t)$ is the score function. The Fokker-Planck equation associated with the reverse SDE can be written as
\begin{equation}\label{eq:FK}
  \frac{p_t(\zb_t)}{dt}
  = -\nabla \cdot \left[\fb(\zb,t)p_t(\zb_t)
    +\mathbf{G}(t)\mathbf{G}(t)^{\top} \scoreb_t(\zb) p_t(\zb_t)\right]
    + \frac12\mathbf{G}(t)\mathbf{G}(t)^{\top}\Delta p_t(\zb_t).
\end{equation}
Substituting $\nabla p_t(\zb_t) = p_t(\zb_t) \nabla \log p_t(\zb_t) = \scoreb_t(\zb_t)p_t(\zb_t)$ into Eq.~\eqref{eq:FK}, we can rewrite the Fokker-Planck equation as
\begin{equation}
  \frac{p_t(\zb_t)}{dt}
  = -\nabla \cdot\left[\fb(\zb_t,t)p_t(\zb_t)
    -\frac12 \mathbf{G}(t)\mathbf{G}(t)^{\top}\scoreb_t(\zb_t) p_t(\zb_t)\right],
\end{equation}
which corresponds to the following probability-flow ODE:
\begin{equation}
  {d\zb_t}
  =
  \left[\fb(\zb_t,t)
  -\frac12\mathbf{G}(t)\mathbf{G}(t)^{\top}\scoreb_t(\zb_t)\right] dt,
  \label{eq:pfodegeneric}
\end{equation}
with integration from the terminal distribution at \(t=1\) toward the data distribution at
\(t=0\). While the SDE (\ref{eq:spfodegeneric}) and the ODE (\ref{eq:pfodegeneric}) define different paths in state space, their corresponding probability density function evolve the same, which is what concerns us here.

The central computational task in solving the reverse ODE is to evaluate or approximate the
score function
\begin{equation}
  \scoreb_t(\zb)
  =
  \nabla\log p_t(\zb).
  \label{eq:diff_score}
\end{equation}
In filtering, the analysis score is constructed from forecast and likelihood information.
In the EnSF family, this is implemented through a pseudo-time combination of a forecast score
and an observation score,
\begin{equation}
  \scoreb_t^a(\zb)
  \approx
  \scoreb_t^f(\zb)
  + w(t)\,\scoreb_t^{\rm lik}(\zb),
  \label{eq:scoresplit}
\end{equation}
where \(\scoreb_t^f(\zb)\) is the forecast (prior) score,
\(\scoreb_t^{\rm lik}(\zb)\) is an approximation to the likelihood contribution induced by
the observation model in Eq.~\eqref{eq:da}, and \(w(t)\) controls how observation information
is introduced along the reverse transport path. The original EnSF is training-free: the forecast
score is estimated directly from the forecast ensemble rather than from a separately trained
neural score model \cite{Bao2024}. The method has demonstrated strong performance in solving several highly nonlinear, high-dimensional data-assimilation problems \cite{XiongEtAl2026ObservationNetworks, BaoEtAl2026UnitedFilter, HuynhEtAl2026AdaptiveLearning}.

For linear observation operators, the forecast-score and likelihood contributions can be
evaluated analytically under the Gaussian-mixture prior assumption. For nonlinear
observation operators, however, the exact pseudo-time likelihood score is generally expensive
to evaluate. The EnSF therefore uses an approximate likelihood score together with the
weighting function \(w(t)\) to control the relative contribution of prior and likelihood
information along the reverse trajectory. This approximation introduces posterior-score error,
and recent iterative EnSF method proposed in \cite{zhang2026iensf} explicitly seeks to reduce this
source of error.

The present work addresses a complementary challenge of isotropic EnSF: assimilation with
nonlinear and partial observations. Suppose the likelihood depends directly only on a subset
\(\xb_o\) of the physical state, with the remaining components denoted by \(\xb_u\). Then
the physical-space likelihood gradient has the block structure
\begin{equation}
  \nabla_{\xb}\log p(\yb\mid\xb)
  =
  \begin{bmatrix}
    \gb_o\\
    0
  \end{bmatrix},
\end{equation}
where
\(\gb_o=\nabla_{\xb_o}\log p(\yb\mid\xb_o)\).
Under isotropic score transport, there is no intrinsic covariance operator analogous to the
cross-covariance pathway identified in Eq.~\eqref{eq:blockpropagation} that transfers this
likelihood forcing from the observed to the unobserved components. This limitation has motivated
inpainting and regression-based EnSF extensions for partially observed systems
\cite{Liang2025Inpainting,Xiong2026EnSFLR}. These approaches can be effective, but they
introduce an additional reconstruction or regression mechanism to communicate observational
information to the unobserved state, rather than making this coupling an intrinsic part of the
score transport itself. CI-EnSF takes a different approach by embedding the forecast-error
covariance geometry directly into the diffusion tensor, so that covariance-mediated information
spreading acts continuously throughout the reverse transport.

\subsection{Standardized covariance representation}
\label{sec:standardization}

The limitation of isotropic EnSF identified in Section~3.1 motivates introducing
forecast-error covariance geometry directly into the score-based diffusion.
To separate componentwise forecast variance from cross-component correlation and
to remove physical units from the diffusion coordinates, we represent this
covariance geometry in a standardized state space.

The componentwise normalization matrix and the standardized forecast anomalies
are defined by
\begin{equation}
  \Dmat
=\diag(\sigma_1^f,\ldots,\sigma_n^f),
  \qquad
  \ub_i
  =
  \Dmat^{-1}(\xbf_i-\xbarf),
  \label{eq:standardize}
\end{equation}
where \(\sigma_j^f\) is the forecast standard deviation of state component \(j\).
For the numerical experiments, the correlation structure is estimated from the
standardized ensemble anomalies and then localized and regularized. Let
\(\mathbf{A}\in\mathbb{R}^{n\times N_e}\) denote the anomaly matrix whose
\(i\)th row is \(\ub_i^{\top}\). The localized and regularized correlation
matrix is defined by
\begin{equation}
  \Cmat
  =
  \frac{1}{1+\varepsilon}
  \left[
    \Cmat_{\rm loc}\circ
    \frac{\mathbf{A}\mathbf{A}^{\top}}{N_e-1}
    +
    \varepsilon\I
  \right],
  \label{eq:localizedC}
\end{equation}
where \(\circ\) denotes the Schur product,
\(\Cmat_{\rm loc}\) is a prescribed localization matrix with unit diagonal,
and \(\varepsilon \I\) is a numerical regularization.
The diagonal regularization is to prevent nearly singular covariance
directions and improve the numerical robustness of the covariance factorization.
Although the localized correlation matrix is positive definite under the
localization used here, finite-ensemble sampling could produce very small
eigenvalues. The regularization introduces a controlled eigenvalue floor while
preserving the unit diagonal of the correlation matrix. We set $\varepsilon = 10^{-6}$ in all numerical experiments.

The localized correlation matrix is factorized as
\begin{equation}
  \Cmat
  =
  \Lmat\Lmat^{\top}.
  \label{eq:Cfactor}
\end{equation}
The corresponding localized physical-space forecast-error covariance model and
its square-root factor are then defined by
\begin{equation}
  \Bmat^f
  =
  \Dmat\Cmat\Dmat,
  \qquad
  \mathcal{L}
  =
  \Dmat\Lmat,
  \qquad
  \Bmat^f
  =
  \mathcal{L}\mathcal{L}^{\top}.
  \label{eq:Bphysical}
\end{equation}
The factors \(\Lmat\) and \(\mathcal{L}\) therefore describe the same localized
covariance geometry in standardized and physical coordinates, respectively.
This representation allows the artificial diffusion to be formulated in
dimensionless standardized coordinates while retaining the localized
forecast-error covariance structure required for information propagation in
physical space.
The particular covariance construction in Eq.~\eqref{eq:localizedC} is not
essential to the CI-EnSF formulation. Static, ensemble-based, hybrid, or
operator-defined covariance models may also be used, provided that {the action of the covariance matrix on a vector and a factorization of that matrix to generate random vectors at psuedo time $t=1$ can be efficiently implemented.}

\subsection{Covariance-informed forward diffusion process}

The standardized covariance representation in Section~\ref{sec:standardization}
provides the geometry needed to extend an isotropic EnSF to covariance-informed
transport. CI-EnSF incorporates this geometry directly into the diffusion
coefficient, so that stochastic perturbations are isotropic in control space but correlated according to the localized forecast-error structure in
standardized state space.
Using the correlation factor \(\Lmat\) defined in
Eq.~\eqref{eq:Cfactor}, the covariance-informed forward diffusion is defined by
\begin{equation}
  d\zb_t
  =
  b(t)\zb_t\,dt
  +
  \sigma(t)\Lmat\,d\mathbf{W}_t,
  \label{eq:cvtforward}
\end{equation}
where \(\mathbf{W}_t\) is a standard \(n\)-dimensional Wiener process.
The instantaneous diffusion covariance in standardized coordinates is therefore
\(\sigma^2(t)\Cmat\). Under the transformation back to physical coordinates,
the corresponding diffusion covariance is \(\sigma^2(t)\Bmat^f\), so the
artificial diffusion inherits the localized forecast-error covariance geometry
introduced in Section~\ref{sec:standardization}.

The introduction of the covariance factor \(\Lmat\) into the forward SDE is not
an ad hoc modification of the isotropic EnSF. Similar covariance
factorizations are widely used in variational data assimilation through
control-variable transforms, where \(\Cmat=\Lmat\Lmat^{\top}\) maps an isotropic
control variable into a state with the prescribed covariance geometry. The
following proposition shows that the same interpretation applies to CI-EnSF.
\begin{proposition}[Control-space equivalence]
Assume that \(\Cmat\) is positive definite and that its factor
\(\Lmat\) is nonsingular. Under the control-variable transformation
\(\zb_t=\Lmat\vb_t\), the covariance-informed forward diffusion in
Eq.~\eqref{eq:cvtforward} is equivalent to the isotropic control-space
process
\begin{equation}
  d\vb_t
  =
  b(t)\vb_t\,dt
  +
\sigma(t)\,d\mathbf{W}_t.
  \label{eq:isotropicv}
\end{equation}
Thus, the same forward diffusion is isotropic in covariance-whitened control
coordinates and covariance-shaped in standardized state coordinates.
\end{proposition}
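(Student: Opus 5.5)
The plan is to exploit the fact that the transformation \(\zb_t=\Lmat\vb_t\) is linear and does not depend on the pseudo-time \(t\), so Itô's formula contains no second-order correction and the map acts on the SDE term by term. Since \(\Lmat\) is nonsingular, \(\vb_t=\Lmat^{-1}\zb_t\) is a well-defined, continuous, adapted process whenever \(\zb_t\) is. The same holds in the opposite direction, so the two descriptions carry the same information pathwise.

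First I will take \(\zb_t\) solving Eq.~\eqref{eq:cvtforward} and apply the constant matrix \(\Lmat^{-1}\) to both sides of the integral form
\[
\zb_t=\zb_0+\int_0^t b(s)\zb_s\,ds+\int_0^t\sigma(s)\Lmat\,d\mathbf{W}_s .
\]
Linearity of the Lebesgue and Itô integrals lets \(\Lmat^{-1}\) pass inside both integrals. The drift then becomes \(b(s)\Lmat^{-1}\zb_s=b(s)\vb_s\), because the scalar drift coefficient \(b(t)\) commutes with every matrix. The noise term becomes \(\sigma(s)\Lmat^{-1}\Lmat\,d\mathbf{W}_s=\sigma(s)\,d\mathbf{W}_s\). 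This gives Eq.~\eqref{eq:isotropicv} with initial condition \(\vb_0=\Lmat^{-1}\zb_0\), driven by the same Wiener process.

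Second, I will run the converse argument. If \(\vb_t\) solves Eq.~\eqref{eq:isotropicv}, then multiplying by \(\Lmat\) recovers Eq.~\eqref{eq:cvtforward} for \(\zb_t=\Lmat\vb_t\). Both equations are linear with deterministic coefficients, so pathwise uniqueness holds under mild integrability of \(b\) and \(\sigma\). The correspondence between solutions is therefore a bijection, which is the sense of "equivalent" in the statement.

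Finally, as a consistency check matching the interpretation in the statement, I will compare the instantaneous diffusion covariances. In control space the diffusion covariance is \(\sigma^2(t)\I\), which is isotropic. In standardized coordinates it is \(\sigma^2(t)\Lmat\Lmat^\top=\sigma^2(t)\Cmat\), which is covariance-shaped by Eq.~\eqref{eq:Cfactor}. The corresponding generators and Fokker--Planck equations are therefore related by the linear change of variables, with a constant Jacobian \(|\det\Lmat|^{-1}\).

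The only step that needs care is the drift. The drift is \(b(t)\zb_t\) with a scalar \(b(t)\), rather than a general matrix drift, and this is what lets it commute with \(\Lmat^{-1}\); I will state this explicitly. Everything else is routine linear algebra, with no real obstacle.
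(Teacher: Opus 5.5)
Your proposal is correct and follows essentially the same route as the paper, which substitutes \(\zb_t=\Lmat\vb_t\) into Eq.~\eqref{eq:cvtforward}, uses the pseudo-time independence of \(\Lmat\), and left-multiplies by \(\Lmat^{-1}\). Your integral-form bookkeeping and explicit converse direction simply spell out what the paper states in two lines; note that the bijection already follows because \(\zb\mapsto\Lmat^{-1}\zb\) and \(\vb\mapsto\Lmat\vb\) are mutually inverse, so you do not need pathwise uniqueness (which would anyway need care, since \(b\) and \(\sigma\) blow up at \(t=1\)).
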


The equivalence follows directly from the pseudo-time independence of
\(\Lmat\) within one analysis cycle. Substitution of the transformation
\(\zb_t=\Lmat\vb_t\) into Eq.~\eqref{eq:cvtforward}, followed by
left multiplication with \(\Lmat^{-1}\), gives the isotropic process in
Eq.~\eqref{eq:isotropicv}. In the present implementation, the diagonal
regularization introduced in Eq.~\eqref{eq:localizedC} ensures a full-rank
correlation matrix. If a positive-semidefinite but low-rank covariance model is
used instead, the corresponding control-space representation applies on the
range of the covariance factor and requires an appropriate reduced-space
formulation.

The temporal evolution of the conditional mean and covariance is specified
through the mean-decay and variance-growth schedules
\begin{equation}
  \alpha_t=1-t,
  \qquad
  \beta_t^2=t.
  \label{eq:scheduleab}
\end{equation}
The corresponding drift and diffusion amplitudes are
\begin{equation}
  b(t)=\frac{1}{t-1},
  \qquad
  \sigma^2(t)=\frac{1+t}{1-t}.
  \label{eq:schedules}
\end{equation}

Under the schedules in Eqs.~\eqref{eq:scheduleab}--\eqref{eq:schedules},
the conditional transition law of the forward diffusion is
\begin{equation}
  \zb_t\mid\zb_0
  \sim
  \N\!\left(
    (1-t)\zb_0,\;
    t\Cmat
  \right).
  \label{eq:forwardkernel}
\end{equation}
To construct a member-conditioned representation of the forecast distribution,
the initial diffusion state associated with the standardized forecast member
\(\ub_i\) is assigned the following Gaussian distribution
\begin{equation}
  \zb_0\mid\ub_i
  \sim
  \N\!\left(
    \ub_i,\;
    \gamma\Cmat
  \right),
  \label{eq:memberprior}
\end{equation}
where \(\gamma>0\) is a dimensionless scaling parameter controlling the covariance
width around each forecast member.
The positive scale \(\gamma\) also serves as a regularization of the empirical
forecast distribution. At the ensemble level, the member-conditioned
construction can be interpreted as an equal-weight Gaussian mixture centered at
the forecast members, with each component having covariance \(\gamma\Cmat\).
If \(\gamma=0\), each component collapses to a Dirac delta function and its covariance
along the forward diffusion becomes \(t\Cmat\), which vanishes as \(t\to0\).
The corresponding prior score therefore becomes singular at $t=0$ and forces each member back toward its original forecast location, so that a finite likelihood forcing cannot produce a nonzero continuous
displacement in the limit \(t\to0\). A positive \(\gamma\) removes this
degeneracy, thereby allowing the
likelihood transport to produce a nontrivial analysis update.

\begin{proposition}[Analytical CI-EnSF kernel]
\label{prop2}
The combination of the forward transition law in
Eq.~\eqref{eq:forwardkernel} and the member-conditioned initial distribution
in Eq.~\eqref{eq:memberprior} gives the conditional transition kernel, i.e., the distribution of the intermediate state $\zb_t$ conditional on the forecast member $\ub_i$,
\begin{equation}
  {
  \zb_t\mid\ub_i
  \sim
  \N\!\left(
    (1-t)\ub_i,\;
    c(t)\Cmat
  \right),
  \qquad
  c(t)=(1-t)^2\gamma+t
  }.
  \label{eq:centralKernel}
\end{equation}
The conditional covariance therefore remains a scalar multiple of the same
localized correlation geometry throughout pseudo-time.
\end{proposition}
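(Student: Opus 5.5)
The plan is to treat this as a composition of two Gaussian laws: the member-conditioned initial law from Eq.~\eqref{eq:memberprior} and the forward transition from Eq.~\eqref{eq:forwardkernel}. We marginalize \(\zb_0\) out of the chain \(\ub_i\to\zb_0\to\zb_t\).

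The first step is to write the forward transition in reparametrized form, \(\zb_t=(1-t)\zb_0+\sqrt{t}\,\Lmat\boldsymbol{\xi}\) with \(\boldsymbol{\xi}\sim\N(0,\I)\). Here \(\boldsymbol{\xi}\) is independent of \(\zb_0\) because the Wiener increments on \([0,t]\) are independent of the initial state. Similarly, the member prior is written as \(\zb_0=\ub_i+\sqrt{\gamma}\,\Lmat\boldsymbol{\zeta}\) with \(\boldsymbol{\zeta}\sim\N(0,\I)\) independent of \(\boldsymbol{\xi}\). Substituting gives
\[
\zb_t=(1-t)\ub_i+(1-t)\sqrt{\gamma}\,\Lmat\boldsymbol{\zeta}+\sqrt{t}\,\Lmat\boldsymbol{\xi}.
\]
Conditional on \(\ub_i\), this is an affine function of a jointly Gaussian vector, so it is Gaussian. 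Its mean is \((1-t)\ub_i\), since both noise terms have zero mean.

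For the covariance, independence of \(\boldsymbol{\zeta}\) and \(\boldsymbol{\xi}\) means the two contributions simply add. The result is \((1-t)^2\gamma\,\Lmat\Lmat^{\top}+t\,\Lmat\Lmat^{\top}=c(t)\Cmat\), using Eq.~\eqref{eq:Cfactor}. The key structural point is that both the initial spread and the diffusion share the same factor \(\Lmat\). That is exactly why the covariance stays a scalar multiple of \(\Cmat\), which gives the final sentence of the proposition. Equivalently, one could combine the densities via the standard Gaussian identity \(\int \N(\zb_t;a\zb_0,\Sigma_1)\N(\zb_0;\mu,\Sigma_0)\,d\zb_0=\N(\zb_t;a\mu,a^2\Sigma_0+\Sigma_1)\).

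The main step requiring care is justifying Eq.~\eqref{eq:forwardkernel} itself, if one does not take it as given. Because \(\Lmat\) is constant in pseudo-time, one can work in the control variable \(\vb_t=\Lmat^{-1}\zb_t\) of the control-space equivalence proposition. There, each component is a scalar linear SDE with integrating factor \(e^{\int_0^t b}=1-t\), so the mean is \((1-t)\vb_0\). The variance is \((1-t)^2\int_0^t\sigma^2(s)/(1-s)^2\,ds\). With \(\sigma^2(s)=(1+s)/(1-s)\), the integrand is \((1+s)/(1-s)^3\), whose antiderivative is \(1/(1-s)^2-1/(1-s)\). This gives \((1-t)^2\bigl[t/(1-t)^2\bigr]=t\), and mapping back by \(\Lmat\) recovers covariance \(t\Cmat\).

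Once that kernel is confirmed, the composition above is routine. The only remaining point is that \(\zb_0\) is drawn independently of the driving Brownian motion, which is implicit in the construction.
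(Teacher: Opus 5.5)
Your proof is correct and follows the paper's argument: both write $\zb_t$ as $(1-t)\zb_0$ plus Gaussian noise independent of $\zb_0$, invoke joint Gaussianity, and add the covariance contributions $(1-t)^2\gamma\Cmat$ and $t\Cmat$ to get $c(t)\Cmat$. The paper computes these via the laws of total expectation and total covariance rather than an explicit reparametrization. Your additional derivation of Eq.~\eqref{eq:forwardkernel} from the SDE schedules (integrating factor $1-t$, variance integral equal to $t$) is also correct and goes beyond the paper, which takes that transition law as given.
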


An important consequence of Proposition~\ref{prop2} is that the dependence on
both the forecast member and the prior-covariance scale vanishes at the terminal
time. The resulting terminal distribution is common to all ensemble members,
\begin{equation}
  \zb_1\mid\ub_i
  \sim
  \N(\mathbf{0},\Cmat),
\label{eq:terminaldistribution}
\end{equation}
which provides the reference distribution from which the reverse transport is
initialized. A complete proof of Proposition~\ref{prop2} is provided in
Appendix~\ref{sec:app1}.

\subsection{Cancellation of the covariance from the prior transport}

The analytical member-conditioned kernel derived in
Proposition~\ref{prop2} provides a closed-form expression for the forecast
score at every pseudo-time. For ensemble member \(i\), the corresponding
forecast score is
\begin{equation}
  \scoreb_{t,i}^f(\zb)
  =
  -\bigl(c(t)\Cmat\bigr)^{-1}
  \left[\zb-(1-t)\ub_i\right].
  \label{eq:priorscore}
\end{equation}
Although evaluation of this score in isolation appears to require the inverse
of the correlation matrix, the reverse transport depends on the score only
through its covariance-weighted form.
For the covariance-informed forward diffusion in
Eq.~\eqref{eq:cvtforward}, the associated probability-flow ODE takes the form
\begin{equation}
  \frac{d\zb}{dt}
  =
  b(t)\zb
  -
  \frac{1}{2}\sigma^2(t)\Cmat\scoreb_t(\zb).
  \label{eq:cvtpfode}
\end{equation}
The covariance weighting in this transport leads to the following
cancellation for the forecast-score contribution.

\begin{proposition}[Prior-transport cancellation]
For the member-conditioned kernel in Eq.~\eqref{eq:centralKernel}, the
covariance-weighted forecast score satisfies
\begin{equation}
  \Cmat\scoreb_{t,i}^f(\zb)
  =
  -
  \frac{\zb-(1-t)\ub_i}{c(t)}.
  \label{eq:cancelC}
\end{equation}
Consequently, evaluation of the forecast contribution to the probability-flow
velocity requires neither an explicit application of \(\Cmat^{-1}\) nor a
matrix function of \(\Cmat\).
\end{proposition}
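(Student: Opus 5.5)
The plan is to differentiate the log-density of the Gaussian kernel in Proposition~\ref{prop2} and then multiply by \(\Cmat\).

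First, by Proposition~\ref{prop2}, \(\zb_t\mid\ub_i\sim\N((1-t)\ub_i,c(t)\Cmat)\), where \(c(t)=(1-t)^2\gamma+t\). I would check that \(c(t)>0\) for all \(t\in[0,1]\). This holds because \(\gamma>0\): at \(t=0\) we have \(c=\gamma\), and for \(t>0\) the term \(t\) is positive. Since \(\Cmat\) is positive definite (guaranteed by the \(\varepsilon\I\) regularization in Eq.~\eqref{eq:localizedC}), the matrix \(c(t)\Cmat\) is positive definite and invertible. The kernel therefore has a smooth density,
\[
\log p_{t,i}(\zb) = -\tfrac12\bigl[\zb-(1-t)\ub_i\bigr]^{\top}\bigl(c(t)\Cmat\bigr)^{-1}\bigl[\zb-(1-t)\ub_i\bigr] + \text{const},
\]
where the constant does not depend on \(\zb\).

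Next, I would take the gradient in \(\zb\). Because \(\Cmat\) is symmetric, the gradient of the quadratic form is \(-(c(t)\Cmat)^{-1}[\zb-(1-t)\ub_i]\). This recovers Eq.~\eqref{eq:priorscore}. Multiplying on the left by \(\Cmat\) and using \((c\Cmat)^{-1}=c^{-1}\Cmat^{-1}\) (valid since \(c\) is a nonzero scalar) gives \(\Cmat\Cmat^{-1}=\I\). The result is Eq.~\eqref{eq:cancelC}.

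For the consequence, I would substitute into the probability-flow ODE in Eq.~\eqref{eq:cvtpfode}. The forecast contribution to the velocity is then \(\tfrac12\sigma^2(t)[\zb-(1-t)\ub_i]/c(t)\). This uses only vector operations and scalar schedules, so it needs no \(\Cmat^{-1}\) and no matrix function of \(\Cmat\).

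There is no real obstacle here. The only points needing care are the invertibility of \(c(t)\Cmat\) at the endpoints, and the observation that the cancellation relies on the diffusion tensor \(\sigma^2(t)\Cmat\) and the kernel covariance \(c(t)\Cmat\) sharing the same matrix \(\Cmat\). That shared structure is exactly what Proposition~\ref{prop2} provides.
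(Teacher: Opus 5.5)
Your proposal is correct and follows essentially the same route as the paper. The paper also derives Eq.~\eqref{eq:cancelC} from the scalar-matrix structure of the kernel covariance \(c(t)\Cmat\): the identity \(\Cmat\bigl(c(t)\Cmat\bigr)^{-1}=c(t)^{-1}\I\) cancels the inverse in Eq.~\eqref{eq:priorscore} against the preconditioning \(\sigma^2(t)\Cmat\) already present in the probability-flow ODE, and your explicit checks that \(c(t)>0\) on \([0,1]\) and that \(\Cmat\) is invertible make precise what the paper leaves implicit.
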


The cancellation follows directly from the scalar-matrix structure of the
conditional covariance \(c(t)\Cmat\). In particular, the inverse covariance
appearing in the forecast score is exactly canceled by the covariance
preconditioning already present in the probability-flow ODE. This property
preserves an analytical and computationally inexpensive forecast contribution
even when a spatially structured covariance model is used.
Importantly, this cancellation does not remove the covariance geometry from
CI-EnSF. The correlation matrix still determines the {random draws at $t=1$,}
whose distribution is
$
  \zb_1
  \sim
  \N(\mathbf{0},\Cmat),
$
and more importantly for data assimilation, the same covariance geometry
remains explicitly active in the likelihood contribution to the reverse
transport. The next section shows that this remaining covariance weighting
provides the pathway through which observational information is propagated
from observed to unobserved state components.

\subsection{Covariance-informed likelihood transport}
\label{sec:likelihoodtransport}

The prior-transport cancellation established in Section~3.4 leaves the
forecast contribution analytically tractable while preserving the covariance
operator in the likelihood contribution. We now show that this remaining
covariance weighting provides the mechanism by which observational information
is transferred from observed to unobserved state components during the reverse
transport.
The physical state associated with a standardized coordinate \(\zb\) is defined by
\begin{equation}
  \xb(\zb)
  =
  \xbarf+\Dmat\zb,
  \label{eq:destandardize}
\end{equation}
where \(\xbarf\) is the forecast ensemble mean defined in
Eq.~\eqref{eq:Bmat}, and \(\Dmat\) is the normalization matrix defined
in Eq.~\eqref{eq:standardize}. For Gaussian observation errors, the physical-space
log likelihood is
\begin{equation}
  \log p(\yb\mid\xb)
  =
  -\frac12
  \left[\yb-\Hcal(\xb)\right]^{\top}
  \Rmat^{-1}
  \left[\yb-\Hcal(\xb)\right]
  +\mathrm{const}.
  \label{eq:loglikelihood}
\end{equation}
Let \(\mathbf{J}_{\Hcal}(\xb)\) denote the Jacobian of the observation operator.
The corresponding likelihood gradient in physical coordinates is
\begin{equation}
  \nabla_{\xb}\log p(\yb\mid\xb)
  =
  \mathbf{J}_{\Hcal}(\xb)^{\top}
  \Rmat^{-1}
  \left[\yb-\Hcal(\xb)\right].
  \label{eq:likphysical}
\end{equation}
Using the state transformation in Eq.~\eqref{eq:destandardize}, the chain rule
gives the exact likelihood score at the target endpoint \(t=0\),
\begin{equation}
  \scoreb^{\rm lik}_0(\zb)
  =
  \Dmat
  \mathbf{J}_{\Hcal}\!\left(\xb(\zb)\right)^{\top}
  \Rmat^{-1}
  \left[
    \yb-\Hcal\!\left(\xb(\zb)\right)
  \right].
  \label{eq:likstandard}
\end{equation}

For \(t>0\), the exact pseudo-time likelihood score induced by the forward
diffusion is generally not available in closed form for a nonlinear observation
operator. Following the original EnSF construction \cite{Bao2024,BaoEtAl2024ScoreFilter}, CI-EnSF
therefore approximates this quantity by evaluating the physical observation
likelihood at the evolving pseudo-state. The approximate likelihood score for
member \(i\) is defined as
\begin{equation}
  \widetilde{\scoreb}_{t,i}^{\rm lik}(\zb)
  =
  \Dmat
  \mathbf{J}_{\Hcal}\!\left(\xb(\zb)\right)^{\top}
  \Rmat^{-1}
  \left[
    \yb-\Hcal\!\left(\xb(\zb)\right)
  \right].
  \label{eq:approxlikscore}
\end{equation}
The corresponding approximate analysis score is
\begin{equation}
\widetilde{\scoreb}_{t,i}^{a}(\zb)
  =
  \scoreb_{t,i}^{f}(\zb)
  +
  w(t)\,\widetilde{\scoreb}_{t,i}^{\rm lik}(\zb),
  \qquad
  t\in[0,1].
  \label{eq:weightedscore}
\end{equation}
The likelihood-weighting function is chosen as
\(w(t)=1-t\). This choice provides endpoint consistency: at \(t=0\), the
approximate likelihood score reduces to the physical likelihood score in
Eq.~\eqref{eq:likstandard}, whereas at \(t=1\), the likelihood contribution
vanishes and the analysis and forecast diffusions share the same terminal
reference distribution.

Substitution of the analysis-score approximation in
Eq.~\eqref{eq:weightedscore} into the probability-flow ODE in
Eq.~\eqref{eq:cvtpfode}, together with the prior-transport cancellation in
Eq.~\eqref{eq:cancelC}, gives the member-wise analysis transport
\begin{equation}
  \frac{d\zb}{dt}
  =
  b(t)\zb
  -
  \frac12\sigma^2(t)
  \left[
    -
    \frac{\zb-(1-t)\ub_i}{c(t)}
    +
    w(t)\,\Cmat \;
\widetilde{\scoreb}_{t,i}^{\rm lik}(\zb)
  \right].
  \label{eq:analysis_ode}
\end{equation}
The role of the covariance geometry is most transparent when the reverse
integration direction is written using the reverse pseudo-time
\(\tau=1-t\). The likelihood contribution to the standardized-state velocity
then satisfies
\begin{equation}
  \left.
  \frac{d\zb}{d\tau}
  \right|_{\rm lik}
  =
  \frac12
  \sigma^2(t)w(t)
\Cmat\;\widetilde{\scoreb}_{t,i}^{\rm lik}(\zb).
  \label{eq:likvelocityz}
\end{equation}
Using the physical-state transformation in Eq.~\eqref{eq:destandardize} and
the covariance representation in Eq.~\eqref{eq:Bphysical}, the corresponding
physical-space likelihood velocity is
\begin{equation}
  \left.
  \frac{d\xb}{d\tau}
  \right|_{\rm lik}
  =
  \frac12
  \sigma^2(t)w(t)
  \Bmat^f
  \mathbf{J}_{\Hcal}(\xb)^{\top}
  \Rmat^{-1}
  \left[
    \yb-\Hcal(\xb)
  \right].
  \label{eq:likvelocityx}
\end{equation}

The likelihood transport in Eq.~\eqref{eq:likvelocityx} has the same
covariance-weighted observation-to-state geometry identified in
Eq.~\eqref{eq:blockpropagation}. In particular, when the likelihood depends
directly only on the observed state components, the cross-covariance block
\(\Bmat_{uo}\) transfers the likelihood forcing to the unobserved components.
Thus, covariance-mediated information spreading is not appended as a separate
reconstruction step in CI-EnSF; instead, it acts continuously as an intrinsic
part of the score-based reverse transport. This property constitutes the main
conceptual distinction between CI-EnSF and isotropic EnSF formulations that
require an additional inpainting or regression procedure for partially
observed systems.

\subsection{Relation to continuous-time Gaussian data assimilation}
\label{sec:classicalrelation}

The covariance-weighted likelihood transport derived in
Section~\ref{sec:likelihoodtransport} has a direct connection to the
continuous-time limit of Gaussian data assimilation. For a linear observation operator, \(\Hcal(\xb)=\Hmat\xb\), the Jacobian is
constant, \(\mathbf{J}_{\Hcal}(\xb)=\Hmat\). The reverse-time likelihood velocity therefore reduces to the
covariance-weighted form
\begin{equation}
  \left.
  \frac{d\xb}{d\tau}
  \right|_{\rm lik}
  \propto
  \Bmat^f\Hmat^\top\Rmat^{-1}
  [\yb-\Hmat\xb].
  \label{eq:cvtinstant}
\end{equation}
The covariance-weighted mapping in
Eq.~\eqref{eq:cvtinstant} differs in form from a discrete Kalman analysis,
whose gain contains the inverse of the innovation covariance
\(\Hmat\Bmat^f\Hmat^{\top}+\Rmat\). Because CI-EnSF incorporates likelihood
information continuously along the reverse pseudo-time trajectory, its
instantaneous likelihood velocity is more naturally compared with the
continuous-time Kalman--Bucy observation update.

The connection can be seen by considering a continuous observing system over
an infinitesimal physical-time interval \(\Delta t\). Let \(\Rmat_t\) denote
the continuous-time observation-noise covariance rate. The observation
averaged over the interval has the corresponding discrete observation-error
covariance
$
  \Rmat_d
  =
  \frac{\Rmat_t}{\Delta t}.
  \label{eq:Rd_main}
$
The discrete Kalman gain associated with this observation covariance is
\begin{equation}
  \Kmat_{\rm discrete}
  =
  \Delta t\,
  \Bmat^f\Hmat^{\top}
  \left(
    \Delta t\,\Hmat\Bmat^f\Hmat^{\top}
    +\Rmat_t
  \right)^{-1}.
  \label{eq:Kdiscrete_main}
\end{equation}
The gain per unit physical time therefore has the continuous-time limit
\begin{equation}
  \Kmat_t
  =
  \lim_{\Delta t\to0}
  \frac{\Kmat_{\rm discrete}}{\Delta t}
  =
  \Bmat^f\Hmat^{\top}\Rmat_t^{-1}.
  \label{eq:kb_limit_main}
\end{equation}
Thus, the CI-EnSF likelihood transport and the Kalman--Bucy observation update
share the same instantaneous covariance-weighted observation-to-state
geometry. In both cases, the factor \(\Bmat^f\Hmat^{\top}\) determines how
observation-space information is mapped into the model state, including the
propagation of information to unobserved components through forecast-error
cross-covariances. A detailed discrete-to-continuous derivation is provided in
Appendix~\ref{app:kalmanbucy}.

\section{Innovation-based CI-EnSF calibration for robust cycling}
\label{sec:adapt}

The covariance-informed transport supplies the desired DA geometry, but long cycling also requires
that forecast uncertainty and likelihood strength remain statistically compatible with the observed
innovations. We distinguish two feedback problems. First, the ensemble may become too narrow or too
broad relative to its realized forecast errors. Second, a broad forecast can produce large innovations during spin-up, and the
approximate likelihood forcing used in the reverse transport can make the
explicitly integrated ODE numerically stiff. We address these problems
with separate, complementary controls.

\subsection{Innovation-space consistency diagnostic}
\label{sec:rho}
Let the forecast ensemble be mapped into observation space,
\begin{equation}
  \yb_i^f = \Hcal(\xbf_i),
  \qquad
  \ybarf = \frac{1}{N_e}\sum_{i=1}^{N_e}\yb_i^f,
\end{equation}
and define the innovation of the forecast ensemble mean,
$
  \db = \yb-\ybarf.
$
For observation component \(j\), we define the ensemble-predicted observation
variance
\begin{equation}
  s_{f,j}^2
  =
  \frac{1}{N_e-1}
  \sum_{i=1}^{N_e}
  \left(y_{i,j}^f-\bar y_j^f\right)^2.
  \label{eq:obsvariance}
\end{equation}
We first establish the consistency target used below. Let
$
  \nu^2_{f,j}
  =
  \Var\!\left[\Hcal_j(\xb^f)\right]
$
denote the population forecast variance of observation component \(j\).
Under the standard idealized ensemble-consistency argument, the truth and
the \(N_e\) forecast ensemble members are independent draws from the same
forecast distribution. Let the observation be
$
  y_j
  =
  y_j^{\rm true}+\varepsilon_j \text{ with }
  \E[\varepsilon_j]=0,
  \Var(\varepsilon_j)=r_j,
$
where the observation error is independent of the forecast ensemble and truth.
Writing \(\mu_j=\E[\Hcal_j(\xb^f)]\), the innovation can be decomposed as
\begin{equation}
  d_j
  =
  \left(y_j^{\rm true}-\mu_j\right)
  -
  \left(\bar y_j^f-\mu_j\right)
  +
  \varepsilon_j.
\end{equation}
The three terms on the right-hand side are assumed to be mutually independent with variances
\(\nu^2_{f,j}\), \(\nu^2_{f,j}/N_e\), and \(r_j\), respectively.
Hence, we have
\begin{equation}
    \E[d_j]=0
  \quad \text{ and }\quad
  \E[d_j^2]
  =
  \left(1+\frac{1}{N_e}\right)\nu^2_{f,j}+r_j,
  \label{eq:innovation_consistency}
\end{equation}
where the factor \(1+1/N_e\) accounts for the sampling error of the finite-ensemble
mean relative to an independent truth realization.

Eq.~\eqref{eq:innovation_consistency} provides a direct statistical
consistency condition. In particular, if the population variances
\(\nu_{f,j}\) were known, the normalized innovation statistic
\begin{equation}
  \rho_\star
  =
  {\displaystyle\sum_{j=1}^{N_y}d_j^2}\Bigg/
       {\displaystyle\sum_{j=1}^{N_y}
       \left[
       \left(1+\frac{1}{N_e}\right)\nu^2_{f,j}+r_j
       \right]}
  \label{eq:rhostar}
\end{equation}
would satisfy
\begin{equation}
  \E[\rho_\star]=1.
  \label{eq:rhostar_expectation}
\end{equation}
In practice, the population forecast variances \(\nu^2_{f,j}\) are unavailable
and are replaced by their ensemble estimates \(s_{f,j}^2\). We therefore
use the following computable consistency diagnostic
\begin{equation}
  {
  \rho
  =
  {\displaystyle\sum_{j=1}^{N_y} d_j^2}\Bigg /
       {\displaystyle\sum_{j=1}^{N_y}\left[
         \left(1+\frac{1}{N_e}\right)s_{f,j}^2+r_j
       \right]}.}
  \label{eq:rho}
\end{equation}
Because the denominator in Eq.~\eqref{eq:rho} contains finite-ensemble
variance estimates, \(\E[\rho]\) is not exactly equal to one for finite
\(N_e\). Nevertheless, \(s_{f,j}^2\) is an unbiased estimator of
\(\nu^2_{f,j}\), so Eq.~\eqref{eq:rho} is the natural plug-in form of
Eq.~\eqref{eq:rhostar}, and a statistically calibrated ensemble is
expected to produce \(\rho\) fluctuating around unity. Persistent
\(\rho>1\) indicates that the realized innovation energy exceeds that
predicted by the forecast ensemble and specified observation errors,
consistent with an underdispersive forecast ensemble in observation space.
Conversely, persistent \(\rho<1\) indicates that the predicted total
innovation variance is larger than the realized innovation energy.

The ratio in Eq.~\eqref{eq:rho} is formed after summing over
observations rather than by averaging componentwise normalized innovations.
This aggregation reduces sensitivity to sampling fluctuations in individual
ensemble-variance estimates and avoids giving excessive weight to
components with anomalously small denominators.
Because the resulting innovation statistic is still noisy from one analysis cycle
to the next, we apply exponential moving average approach.
Specifically,
\(\rho_{\rm smooth}^{(k)}\) is updated as a weighted average of the current
diagnostic and its previous smoothed value,
\begin{equation}
  \rho_{\rm smooth}^{(k)}
  =
  (1-\zeta)\rho_{\rm smooth}^{(k-1)}
  +
  \zeta\rho^{(k)},
  \label{eq:rhosmooth}
\end{equation}
where \(\zeta\in(0,1]\) trades sampling noise against feedback lag. We set $\zeta = 0.3$ in all numerical experiments.

\subsection{Adaptive prior-covariance scaling}
\label{sec:adaptivegamma}

The parameter \(\gamma\) in Eq.~\eqref{eq:memberprior} determines the width of the
member-conditioned prior at \(t=0\). A larger \(\gamma\) produces a broader analysis ensemble and,
after model propagation, generally increases forecast spread. We therefore use \(\rho_{\rm smooth}\) as a feedback indicator to guide the adjustment of $\gamma$. Specifically, after $(k-1)$-th assimilation cycle, we use the following formula the adjust the scaling parameter $\gamma_k$ used for the $k$-th cycle,
\begin{equation}
  \gamma_k= \min\Big\{\max\left(\sqrt{\rho_{\rm smooth}} \gamma_{k-1}, \gamma_{\min}\right), \gamma_{\max}\Big\},
  \label{eq:gammacontrol}
\end{equation}
where \(\gamma_{\rm max}\) and \(\gamma_{\rm min}\) are tunable hyper-parameters determining the upper and lower bounds of $\gamma_k$, respectively.  Eq.~\eqref{eq:gammacontrol} provides a simple multiplicative feedback mechanism for dynamically
adjusting the prior-covariance scale. When
\(\rho_{\rm smooth}>1\), the realized innovation variance is larger than that
predicted by the ensemble, and the factor \(\sqrt{\rho_{\rm smooth}}\) increases
\(\gamma\) for the next analysis cycle, thereby inflating the
prior spread. Conversely, when \(\rho_{\rm smooth}<1\), the same
update decreases \(\gamma\) and reduces the prior spread. The square-root
dependence provides a moderate response to departures from consistency, while
the bounds \(\gamma_{\min}\) and \(\gamma_{\max}\) prevent unrealistically small
or large prior scales. Thus, the controller continuously steers the prior
uncertainty toward the innovation-consistency target
\(\rho_{\rm smooth}\approx 1\).

A more direct innovation-based estimate of a multiplicative forecast-covariance
correction can be obtained from the scalar consistency relation
\(d^2=\alpha_B\sigma_b^2+\sigma_o^2\), giving
\(\widehat{\alpha}_B=(d^2-\sigma_o^2)/\sigma_b^2\).
Related innovation-based covariance estimation and adaptive-inflation
approaches are well established in data assimilation
\cite{DeeDaSilva1999,DesroziersEtAl2005,Anderson2007,Anderson2009,ElGharamti2018}.
Here we instead use the normalized total-innovation statistic
\(\rho\) from Section~\ref{sec:rho} as a feedback signal rather than as a
direct estimate of the inflation factor.
This feedback controller is designed to provide a more gradual adjustment of
the prior-covariance scale, which is desirable in the present
implementation because overly aggressive increases in \(\gamma\) can increase
the stiffness of the reverse ODE and thereby impose a more restrictive
numerical stability requirement. A systematic comparison between this
consistency-based feedback strategy and direct innovation-based estimation of
the covariance-inflation factor is left for future work.

\subsection{Innovation-driven likelihood tempering}
\label{sec:annealing}
Likelihood tempering, including its implementation through repeated
assimilation or inflation of the observation-error covariance, is a standard
strategy for moderating overly strong updates in nonlinear ensemble and
particle data assimilation
\cite{SakovEtAl2012,EmerickReynolds2013,
VanLeeuwenEtAl2019,KurosawaPoterjoy2021}.
In CI-EnSF, the same idea has an additional numerical role: because the
likelihood contribution enters the reverse ODE through
\(\Rmat^{-1}\), tempering reduces both the magnitude and local sensitivity of
the likelihood forcing and thereby alleviates ODE stiffness during
spin-up.

Adaptive \(\gamma\) alone does not directly control the likelihood-induced
stiffness of the reverse ODE during spin-up. In particular, a broad climatological ensemble can produce
large innovations and therefore {ODE stiffness during
spin-up.} We therefore temper the likelihood during
spin-up by introducing an innovation-dependent effective observation covariance.

For diagonal observation errors with comparable units within an observation
group, we define
$
  \overline{d^2}
  =
  \frac{1}{N_y}\sum_{j=1}^{N_y} d_j^2,
$
and
\begin{equation}
  \Rmat_{\rm eff}
  =
  \diag_{j=1,\ldots,N_y}
  \left\{
    \max\!\left(r_j,\kappa\overline{d^2}\right)
  \right\},
  \label{eq:Reff}
\end{equation}
where $r_j$ is the $j$-th diagonal entry of the observation error covariance matrix $\mathbf{R}$.
Here \(\kappa\ge0\) controls the tempering strength, which is the a hyper-parameter that needs to be tuned for each problem. For heterogeneous
observing systems, Eq.~\eqref{eq:Reff} should be applied separately by
observation type or after appropriate normalization.
When the innovations are large, the tempering rule increases selected
diagonal entries of the observation-error covariance, so that
\(\Rmat_{\rm eff}\succeq\Rmat\) and consequently weakens the likelihood
forcing through \(\Rmat_{\rm eff}^{-1}\).
This limits the stiffness encountered during spin-up. As the
filter approaches a statistically equilibrated cycling regime and the innovation variance decreases, Eq.~\eqref{eq:Reff} automatically returns
\(\Rmat_{\rm eff}\) toward the specified physical covariance \(\Rmat\), so that
the tempering becomes inactive without a prescribed cycle-dependent schedule.

\subsection{Complete CI-EnSF analysis cycle}
\label{sec:algorithm}
Algorithm~\ref{alg:cvtensf} summarizes one CI-EnSF analysis cycle. The presentation uses the
probability-flow ODE; a reverse-SDE implementation follows the same covariance geometry with the
corresponding stochastic term. The global hyper-parameters are omit in Algorithm \ref{alg:cvtensf} for notational simplicity.

\begin{algorithm}[h!]
\caption{One CI-EnSF analysis cycle}
\label{alg:cvtensf}
\begin{spacing}{1.3}
\begin{algorithmic}[1]
\Require Forecast \(X^f=\{\xbf_i\}_{i=1}^{N_e}\), observations \(\yb\), and \(\gamma\), \(\rho_{\rm smooth}\) from pervious cycle.
\State Compute \(\yb_i^f=\Hcal(\xbf_i)\), \(\ybarf\), \(\db=\yb-\ybarf\), and
\(s_{f,j}^2=\Var_i(y_{i,j}^f)\)

\State Compute \(\rho\) from Eq.~\eqref{eq:rho} and update
\(\rho_{\rm smooth}\) using Eq.~\eqref{eq:rhosmooth}

\State Update \(\gamma\) using Eq.~\eqref{eq:gammacontrol}

\State Construct \(\Rmat_{\rm eff}\) from Eq.~\eqref{eq:Reff}

\State Standardize \(X^f\) to obtain \(\ub_i\) and \(\Dmat\), and
estimate/localize \(\Cmat\)

\State Construct a covariance factor \(\Lmat\) satisfying
\(\Cmat=\Lmat\Lmat^{\top}\)

\For{\(i=1,\ldots,N_e\)}

  \State Draw terminal state \(\zb_i(1)=\Lmat\boldsymbol{\xi}_i\), with
  \(\boldsymbol{\xi}_i\sim\N(\mathbf{0},\I)\)

  \State If using perturbed observations, draw
  \(\yb_i=\yb+\eb_i\), with
  \(\eb_i\sim\N(\mathbf{0},\Rmat_{\rm eff})\)

  \State Integrate from \(t=1\) to \(t=t_{\rm stop}\):

  \State \hspace{1em} Compute \(c(t)=(1-t)^2\gamma+t\)

  \State \hspace{1em} Compute
  \(\Cmat\scoreb_{t,i}^{f}
  =-[\zb_i-(1-t)\ub_i]/c(t)\)

  \State \hspace{1em} Compute
  \(\xb_i(t)=\xbarf+\Dmat\zb_i(t)\)

  \State \hspace{1em} Evaluate
  \(\widetilde{\scoreb}_{t,i}^{\rm lik}\) from
  Eq.~\eqref{eq:approxlikscore} using \(\Hcal\), its Jacobian action,
  and \(\Rmat_{\rm eff}\)

  \State \hspace{1em} Integrate Eq.~\eqref{eq:analysis_ode} using
  \(w(t)=1-t\)

  \State Return
  \(\xba_i=\xbarf+\Dmat\zb_i(t_{\rm stop})\)

\EndFor

\State \Return \(X^a=\{\xba_i\}_{i=1}^{N_e}\), updated \(\gamma\), and updated
\(\rho_{\rm smooth}\)

\end{algorithmic}
\end{spacing}
\end{algorithm}

\section{Numerical experiments}
\label{sec:numerics}

The numerical experiments demonstrate the performance of the proposed CI-EnSF under a
progression of increasingly demanding regimes. The experiments using a 40-dimensional Lorenz--96 model provide a controlled setting in which
we first examine sparse linear observations, including the propagation of information into
unobserved state components, and then introduce a strongly nonlinear observation operator and a
climatological initial ensemble. A two-layer surface quasi-geostrophic (SQG) model then tests CI-EnSF in a $d=128\times 128 \times 2=32,768$ dimensional space. The purpose of this hierarchy is not to
show that CI-EnSF outperforms existing covariance-based methods in every setting. Instead, it is to
identify when covariance-informed score transport reproduces the behavior expected from standard
DA and when nonlinear distributional transport provides an additional benefit.

\subsection{Experimental design and evaluation metrics}
\label{sec:exp-design}
The main reference method is the local ensemble transform Kalman filter (LETKF), which performs a localized deterministic ensemble Kalman update in
ensemble space using the forecast ensemble mean and observation-space perturbations. Localization and ensemble-spread parameters are tuned
independently for each experiment. RML-3DVar is included in the Lorenz--96 experiments as a Gaussian, member-conditioned reference: each forecast member
defines the center of a Gaussian prior with the same localized covariance construction used by CI-EnSF, and the corresponding nonlinear variational problem is solved by Gauss--Newton iteration.

Algorithmic parameters are tuned and evaluated using independent realizations of the underlying dynamical system.
Localization and ensemble-spread parameters are tuned jointly when they interact, and the reported
results use only settings selected from the independent tuning realizations. This separation is important because the
optimal localization and inflation parameters differ substantially between warm-start initialization and
cold-start initialization, i.e., spin-up from a climatological ensemble.

The primary evaluation metrics are analysis root-mean-square error (RMSE) and the
Spread/RMSE ratio. At each analysis cycle, RMSE is computed from the analysis
ensemble mean over all state components and then averaged over the evaluation
period and across realizations. Ensemble spread is computed from the sample
variance across ensemble members and averaged over state components. For each
realization, the Spread/RMSE ratio is defined as the root-mean-square ensemble
spread over the evaluation period divided by the corresponding root-mean-square
analysis error, and the reported value is then averaged across realizations.

\subsection{Lorenz--96 with linear observations}
\label{sec:exp-gaussian}

The first experiment tests whether CI-EnSF performs well in a regime
where conventional Gaussian DA is already effective. The Lorenz--96 system has $d=40$ and
$F=8$ and is integrated with fourth-order Runge--Kutta. The truth uses a time step of $\Delta t=0.01$ and
the forecast model uses a coarser step of $\Delta t=0.02$, which introduces a small model-error component.
Analyses are performed every $\Delta t_{\rm obs}=0.2$ (24 h in the standard Lorenz--96 scaling),
and every fourth state variable is observed directly with independent Gaussian error
$\sigma_{\rm obs}=0.1$. The ensemble contains $N_e=20$ members initialized as the ground-truth trajectory plus a Gaussian noise from
$\mathcal{N}(0,0.25\mathbf{I})$. Each realization contains 40 analysis cycles, with verification statistics computed over cycle 10 to cycle 39. Five independent realizations are reserved for parameter tuning and another ten independent realizations for evaluation. After parameter fine tuning, the selected parameters are
$L=14$ with inflation 1.05 for LETKF, $\gamma=1.5$ and $L=3$ for RML-3DVar, and
$\gamma=0.05$ and $L=4$ for CI-EnSF.

\subsubsection{Baseline DA cycling performance}

Table~\ref{tab:gauss24} summarizes the evaluation-period statistics, while Figure~\ref{fig:gauss24} shows the evolution of the analysis error during cycling
and the RMSE ratio of observed and unobserved state components.
In terms of analysis accuracy, LETKF provides the strongest baseline, with a mean RMSE
of 0.076 over the ten independent evaluation realizations. CI-EnSF has a mean RMSE
of 0.1152, compared with 0.1208 for the covariance-matched RML-3DVar reference.
The difference between CI-EnSF and RML-3DVar is only 4.7\% and is not statistically
distinguishable in the paired comparison across realizations. Figure~\ref{fig:gauss24}a
shows the same ordering throughout the evaluation period: all three methods enter a
statistically stationary cycling regime, with LETKF maintaining the lowest analysis error
and CI-EnSF closely tracking RML-3DVar. Thus, replacing the member-conditioned
Gaussian analysis by covariance-informed score transport does not yield an accuracy
advantage for this case, but neither does it introduce unstable or qualitatively
different cycling behavior.

The ensemble-spread diagnostics provide a complementary assessment of calibration.
For an ensemble of $N_e=20$, the finite-ensemble reference value of the spread-to-RMSE ratio is
$\sqrt{N_e/(N_e+1)}=0.976$. The corresponding ratios are 1.07 for LETKF,
1.01 for CI-EnSF, and 0.76 for RML-3DVar. CI-EnSF is therefore close to the
finite-ensemble consistency reference, with modest overdispersion, whereas RML-3DVar
is appreciably underdispersive. This result is useful because the objective of CI-EnSF
is not only to obtain an accurate ensemble mean but also to maintain an ensemble whose
dispersion remains comparable with the analysis error during DA cycling.

Sparse observations provide a second baseline diagnostic. Only every fourth state
component enters the likelihood directly, so the remaining 75\% of the state must be
constrained through the forecast-error covariance. As shown in Table \ref{tab:gauss24}, errors are larger at unobserved components for all three methods.
Nevertheless, Figure~\ref{fig:gauss24}b shows that the unobserved-to-observed RMSE
ratio remains bounded and of order unity rather than
growing with cycling. This diagnostic by itself does not establish the mechanism of information transfer, but it demonstrates that CI-EnSF maintains control of state components that are absent from the likelihood.

Taken together, the linear-observation experiment establishes the intended baseline:
a well-tuned LETKF remains more accurate in the regime for which Gaussian ensemble
updating is well matched to the observation model, whereas CI-EnSF exhibits stable
cycling, near-consistent ensemble dispersion, and sustained control of unobserved state
components. The more demanding nonlinear experiments below therefore test whether
distributional transport provides additional benefit when the assumptions underlying
this linear baseline are progressively weakened.

\begin{table}[h!]
\centering
\small
\caption{Baseline performance for Lorenz--96 assimilation with partial linear observations. Statistics are computed over cycles 10--39 of ten
independent evaluation realizations. Overall, observed, and unobserved RMSE are
computed from the analysis ensemble mean and then averaged over the evaluation
period and across realizations. The Spread/RMSE ratio compares ensemble dispersion
with the realized analysis error. For $N_e=20$, the finite-ensemble consistency
reference is $\sqrt{N_e/(N_e+1)}=0.976$.
LETKF provides the lowest analysis error in this
linear-observation regime, while CI-EnSF remains close to the covariance-matched RML-3DVar reference, maintains near-consistent ensemble spread, and keeps errors in
unobserved state components bounded during repeated cycling.}
\label{tab:gauss24}
\begin{tabular}{ccccc}
\toprule
Method & Overall RMSE &  Observed RMSE & Unobserved RMSE & Spread/RMSE \\
\midrule
CI-EnSF   & 0.1152  & 0.0803 & 0.1240 & 1.01 \\
LETKF      & 0.0760  & 0.0581 & 0.0804 & 1.07 \\
RML-3DVar  & 0.1208  & 0.0763 & 0.1316 & 0.76 \\
\bottomrule
\end{tabular}
\end{table}
\begin{figure}[h!]
\centering
\includegraphics[width=\textwidth]{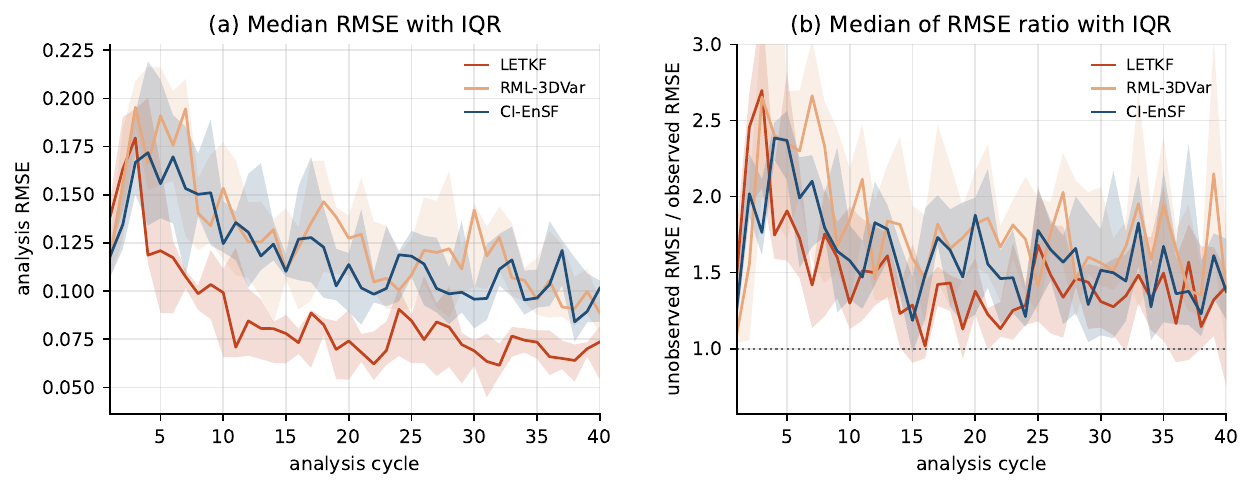}
\vspace{-0.8cm}
\caption{Cycling behavior for the Lorenz--96 linear-observation baseline. Statistics at each analysis cycle are computed across the ten independent evaluation realizations; solid curves denote the median and shading denotes the interquartile range (IQR). (a) Analysis RMSE of the ensemble mean. The
evaluation period is cycles 10--39, over which all three methods reach statistically stationary cycling, with LETKF maintaining the lowest error and CI-EnSF closely
tracking RML-3DVar. (b) Ratio of RMSE at the unobserved 75\% of state components
to that at the directly observed 25\% of state components; the horizontal reference at one denotes equal
error in the two subsets. CI-EnSF does not improve upon a well-tuned LETKF for linear observations, but it cycles stably and keeps unobserved-state errors at the same order as observed-state errors, consistent with effective covariance-mediated information spreading.}
\label{fig:gauss24}
\end{figure}

\subsubsection{Covariance-informed reverse transport during a single analysis}\label{sec:mechanism}
Figure~\ref{fig:mechanism} illustrates the mechanism responsible for updating unobserved variables within CI-EnSF.
For this mechanism diagnostic, we select one cycle of one evaluation realization using the pre-specified rule of the
largest forecast error over the unobserved components. We follow four neighboring states:
$x_{20}$ is observed and $x_{21}$--$x_{23}$ are unobserved. Starting from the same forecast ensemble
and the same terminal noise, the reverse ODE is solved with (i) 25\% observations and the localized
correlation $\Cmat$, (ii) the same 25\% observations with $\Cmat=\I$ (the isotropic EnSF \cite{Bao2024}),
and (iii) 100\% observations with $\Cmat=\I$ as a full-information reference. The first two cases
therefore isolate the role of cross-state covariance in the likelihood transport.
With the isotropic EnSF and only 25\% observations, the likelihood contribution is zero in the
three unobserved components and their ensemble-mean trajectories are nearly vertical. With the
localized covariance, the same observation at $x_{20}$ produces coherent motion in
$x_{21}$--$x_{23}$ toward the truth. The fully observed isotropic case shows the directions obtained
when those components are constrained directly. By comparing the first and third columns of Figure \ref{fig:mechanism}, we see that CI-EnSF bends the reverse-ODE trajectories of the unobserved state variables in directions similar to those obtained by EnSF with 100\% observations, demonstrating the effectiveness of the covariance structure in propagating observational information to unobserved components.

\begin{figure}[htbp]
  \centering  \includegraphics[width=0.82\textwidth]{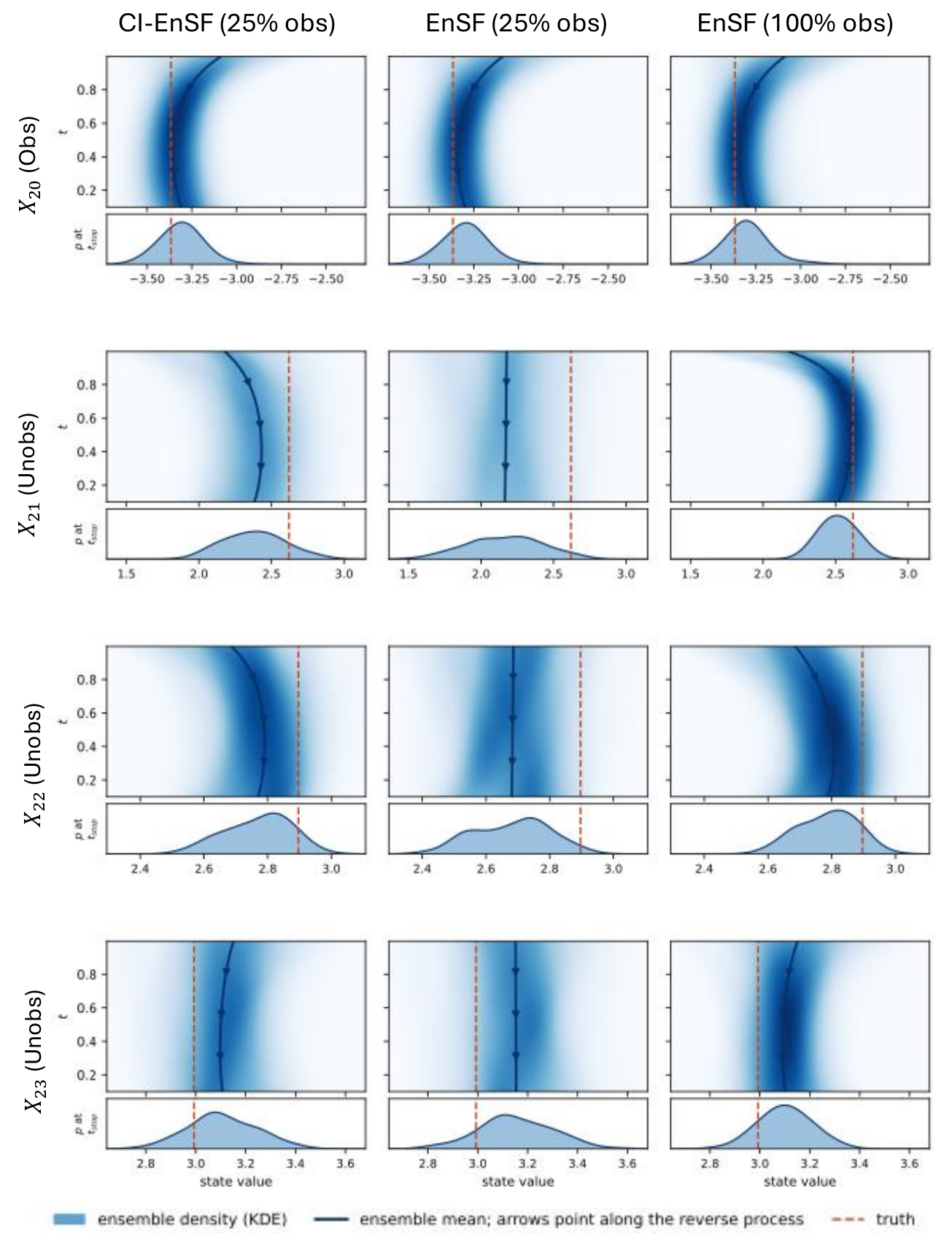}
  \caption{Mechanism of observed-to-unobserved information propagation in one Lorenz--96 analysis.
  The four rows show one observed state, $x_{20}$, and three unobserved neighbors,
  $x_{21}$--$x_{23}$, during one cycle of one evaluation realization. All columns start from the same forecast ensemble
  and terminal noise. The left column uses 25\% observations with the localized covariance
  $\Cmat$ (CI-EnSF); the center uses the same observations with $\Cmat=\I$ (isotropic EnSF); the
  right uses $\Cmat=\I$ with 100\% observations as a full-information reference. Pseudo-time runs
  from $t=1$ to $t_{\rm stop}=0.1$; shading denotes the density of 200 trajectories, the solid curve
  is the ensemble mean, the dashed line is the truth, and the lower strip gives the final analysis
  density. With sparse observations, cross-covariance is the pathway
  that transfers likelihood information into unobserved variables; removing it leaves the unobserved states essentially unaffected by the observation, whereas CI-EnSF moves them in the
  same direction as having full observation.}
  \label{fig:mechanism}
\end{figure}

\subsection{Lorenz--96 with nonlinear observations}
\label{sec:exp-arctan}

The second Lorenz--96 experiment tests CI-EnSF in handling DA problems with a nonlinear observation operator,
$h(x)=\arctan(x)$, and a climatological initial condition. The Lorenz--96 model, the ensemble size $N_e=20$, and
24-h analysis interval are unchanged, but 50\% of the state is observed with
$\sigma_{\rm obs}=0.05$. Each initial ensemble is formed from effectively independent states drawn
from a long free model integration, so the filter begins without state-specific information about the truth. Each assimilation realization contains 80 cycles; verification statistics are computed over cycles 40--79
after the initial transient. Hyper parameters are tuned using 20 independent realizations and evaluated
on 40 disjoint realizations. The selected settings are $L=2$ with inflation
1.15 for LETKF; $\gamma=0.02$, $L=1.5$, and five Gauss--Newton iterations for RML-3DVar; and
$\gamma_{\min} = 0.05$, $\gamma_{\max} = 0.15$  $\gamma_0=(\gamma_{\min}+\gamma_{\max})/2$, $L=1.5$, $N_t=400$, adaptive prior scaling, and likelihood tempering with
$\kappa=0.025$ for CI-EnSF.

\subsubsection{Accuracy, calibration, and cycling stability}\label{sec:nonlinear_acc}
Figure~\ref{fig:arctan24} and Table~\ref{tab:arctan24}
summarize the performance of the three methods under the nonlinear observation
operator and climatological initialization. In contrast to the partial linear-observation
experiment of Section~5.2, this configuration requires the filters both to spin up from
a broad prior ensemble and to assimilate observations through the saturating operator
$h(x)=\arctan(x)$. All three methods reduce the large initial error during the first
several analysis cycles, demonstrating that each can extract useful information from the
nonlinear observations. Their long-term cycling behavior, however, differs substantially.
\begin{table}[h!]
\centering
\small
\caption{Lorenz--96 cycling with the nonlinear observation operator $h(x)=\arctan(x)$ and climatological initialization. Forty
independent evaluation realizations are verified over cycles 40--79. RMSE (stable) and Spread/RMSE are computed
from realizations that remain stable through the experiment; RMSE (all) retains the large errors from numerically unstable
or filter-divergent realizations and is reported as a robustness diagnostic. The finite-ensemble reference
spread/RMSE is 0.976. In contrast to the linear reference experiment, CI-EnSF is both the most accurate method on stable realizations and the only method stable in all 40 evaluation realizations, showing that its advantage emerges under strong observation nonlinearity and difficult spin-up.}
\label{tab:arctan24}
\begin{tabular}{ccccc}
\toprule
Method & Stable realizations & RMSE (stable) & Spread/RMSE & RMSE (all) \\
\midrule
CI-EnSF & 40/40 & 0.3959 & 1.02 & 0.3959 \\
LETKF & 29/40 & 0.5315 & 1.19 & 1.1542 \\
RML-3DVar & 37/40 & 1.2186 & 0.98 & 1.2831 \\
\bottomrule
\end{tabular}
\end{table}
\begin{figure}[h!]
\centering
\includegraphics[width=\textwidth]{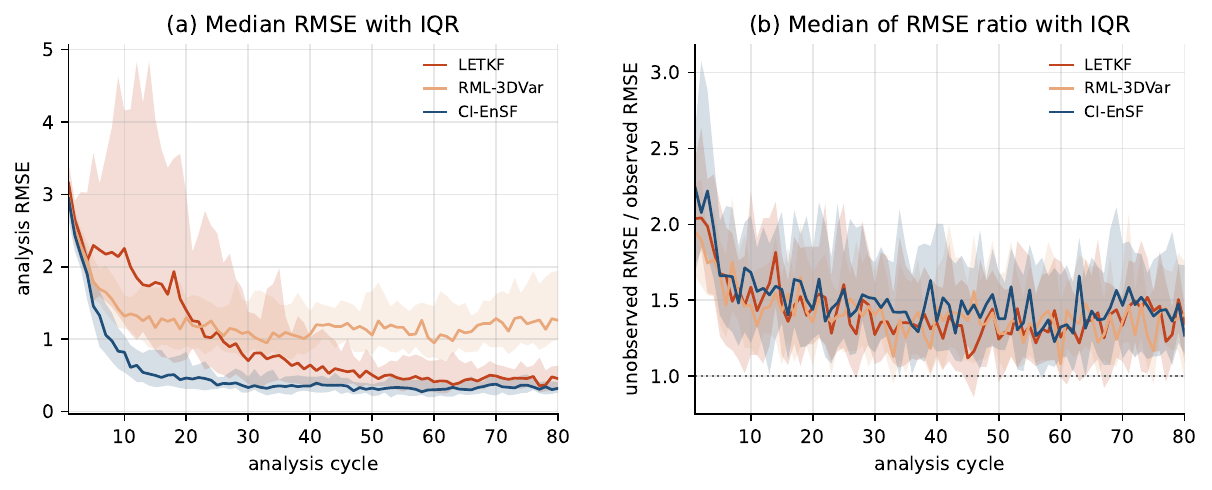}
\vspace{-0.8cm}
\caption{Spin-up and DA cycling under nonlinear Lorenz--96 observations. (a) Median analysis RMSE versus analysis cycle,
with the interquartile range (IQR) across realizations that remain finite at each cycle; the evaluation period is
cycles 40--79. (b) Ratio of RMSE at the unobserved 50\% of state components to that at
the directly observed 50\% of state components. All methods reduce the climatological initial error, but CI-EnSF reaches the lowest sustained RMSE while remaining stable in all 40 realizations; LETKF and
RML-3DVar become unstable or divergent in 11 and 3 realizations, respectively.}
\label{fig:arctan24}
\end{figure}

CI-EnSF reaches the lowest sustained analysis error after spin-up. Over the evaluation
period, its mean RMSE is 0.3959, compared with 0.5315 for LETKF and 1.2186 for
RML-3DVar among the realizations that remain stable. The median trajectories in
Figure~\ref{fig:arctan24}a show that this ordering is established during spin-up and
persists throughout the evaluation period rather than arising from a small number of
individual cycles. Relative to LETKF, the mean RMSE of CI-EnSF is reduced by about
30\% on the respective stable realizations, while the difference relative to RML-3DVar
is substantially larger. This behavior contrasts with the linear-observation experiment,
where LETKF was the most accurate method.

The ensemble-spread statistics show that the improved accuracy is not obtained by
collapsing the ensemble. The spread/RMSE ratios are 1.02 for CI-EnSF, 1.19 for
LETKF, and 0.98 for RML-3DVar, compared with the finite-ensemble consistency
reference of 0.976 for $N_e=20$. Thus, CI-EnSF and LETKF are moderately
overdispersive in this difficult nonlinear regime, whereas RML-3DVar is closer to the
nominal spread/RMSE reference. The latter result should not be interpreted as superior
filter performance, however, because RML-3DVar attains that ratio while maintaining
a much larger analysis error. The spread diagnostic therefore complements, rather than
replaces, the RMSE comparison.

Figure~\ref{fig:arctan24}b provides an additional diagnostic of partial-observation
behavior. After the initial transient, the ratio of unobserved to observed RMSE remains
of order unity for all three methods. The three curves exhibit broadly similar values
during the evaluation period, indicating that the lower total RMSE of CI-EnSF is not
produced by improving only the directly observed components. Instead,
the reduction in analysis error is maintained across both observed and unobserved states. This is consistent with the covariance-based information-spreading
mechanism illustrated in Figure~\ref{fig:mechanism}.

The clearest distinction among the methods is cycling stability. CI-EnSF remains
stable in all 40 independent evaluation realizations, whereas LETKF remains stable in
29 and RML-3DVar in 37. Because conditioning the RMSE only on successful
realizations can favor methods that lose the more difficult cases, Table~\ref{tab:arctan24}
also reports an RMSE summary over all realizations. Under this measure, the difference
widens substantially: CI-EnSF remains at 0.3959, while LETKF and RML-3DVar
increase to 1.1542 and 1.2831, respectively. The accuracy and stability diagnostics
indicate that CI-EnSF not only achieves a lower sustained analysis error, but does so without sacrificing realizations during repeated nonlinear
DA cycling.

\subsubsection{Sensitivity of CI-EnSF to prior scaling and likelihood tempering}
\label{sec:exp-adapt}
The nonlinear experiment uses two adaptive procedures: the innovation-based prior scale $\gamma$ and the
likelihood-tempering covariance $\Rmat_{\rm eff}$. Since $\mathbf{R} = 0.05^2 \mathbf{I}$, we can re-write Eq.~\eqref{eq:Reff} as $\Rmat_{\rm eff} = r_{\rm eff}\mathbf{I}$ with $r_{\rm eff} = \max(0.05^2,\kappa\overline{d^2})$. Thus, we use $r_{\rm eff}$ to represent $\Rmat_{\rm eff}$ in this test.
The roles of $\gamma$ and $r_{\rm eff}$ are separated with the existing four-configuration sensitivity experiment in which adaptive $\gamma$ and likelihood tempering are independently enabled or disabled while all other settings, initial ensembles, and observation sequences are held fixed.
Table~\ref{tab:adapt2x2} shows that likelihood tempering is the primary source of spin-up stability:
with fixed $\gamma$ it increases the number of stable realizations from 36/40 to 40/40, and with adaptive $\gamma$
it increases the number from 38/40 to 40/40. Adaptive $\gamma$ has a different role. Among the realizations
that remain stable without tempering, it reduces RMSE from 0.4343 to 0.3911. With both controls active, CI-EnSF achieves 40/40 stability and RMSE 0.3959.
Figure~\ref{fig:adapt2} makes the separation in time clear: likelihood tempering acts almost entirely during the climatological spin-up and relaxes to the physical
observation error, whereas $\gamma$ evolves more gradually with the innovation consistency diagnostic.
\begin{table}[htbp]
\centering
\small
\caption{Sensitivity of nonlinear Lorenz--96 cycling to adaptive prior scaling and
likelihood tempering. All four configurations are the same as that for CI-EnSF in Section \ref{sec:nonlinear_acc}; only the adaptive inflation parameter $\gamma$ and the threshold
$\kappa=0.025$ for likelihood tempering are switched on or off. Evaluation-period mean $\gamma$ and
${r_{\rm eff}}$ diagnose the action of the two controls over cycles 40--79.
Likelihood tempering is essential for robust climatological spin-up, while adaptive $\gamma$ primarily changes ensemble calibration and can
increase numerical stiffness when used without tempering.}
\label{tab:adapt2x2}
\begin{tabular}{ccccccc}
\toprule
$\gamma$ & tempering & stable realizations & RMSE (stable) & spread/RMSE & $\overline{\gamma}$ & ${\overline{r}_{\rm eff}}$ \\
\midrule
fixed    & off & 36/40 & 0.4343 & 1.03 & 0.100 & 0.0025 \\
adaptive & off & 38/40 & 0.3911 & 1.03 & 0.076 & 0.0025 \\
fixed    & on  & 40/40 & 0.3902 & 1.07 & 0.100 & 0.0028 \\
adaptive & on  & 40/40 & 0.3959 & 1.02 & 0.075 & 0.0029 \\
\bottomrule
\end{tabular}
\end{table}

\begin{figure}[htbp]
\centering
\includegraphics[width=\textwidth]{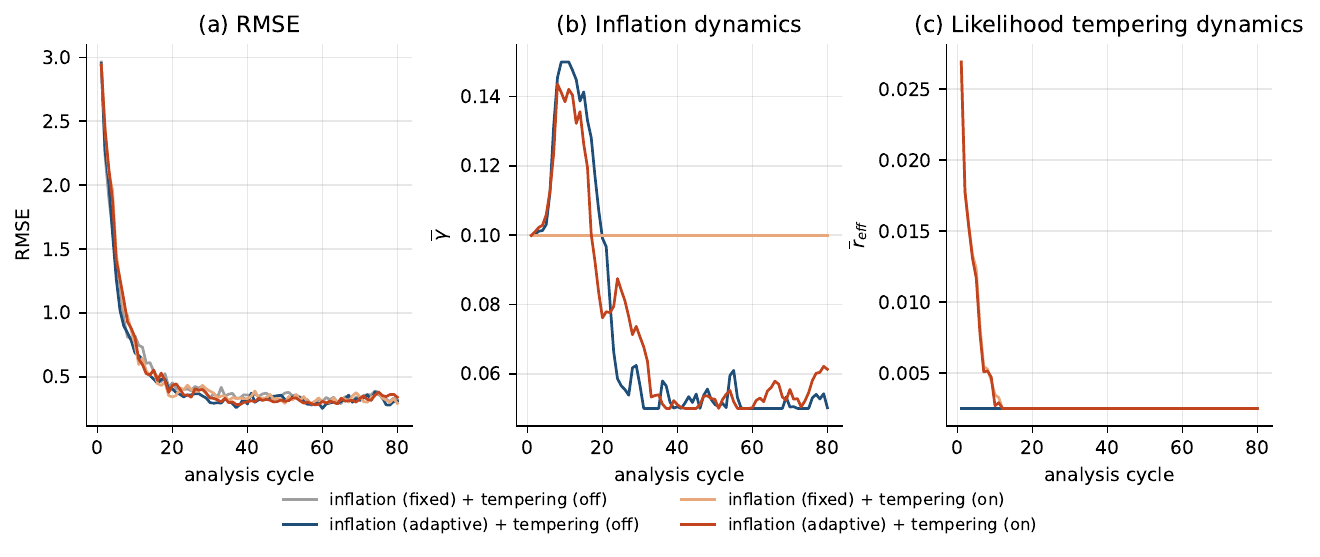}
\vspace{-0.8cm}
\caption{Time evolution of the two adaptive procedures in the nonlinear Lorenz--96 experiment, shown as
medians over 40 evaluation realizations. (a) Analysis RMSE during spin-up for the four combinations of
fixed/adaptive $\gamma$ and likelihood tempering; the vertical line marks the beginning of the
evaluation period. (b) Adaptive prior inflation scalar $\gamma$.
(c) Effective observation-error variance ${r_{\rm eff}}$. Likelihood tempering is a transient spin-up mechanism that rapidly relaxes to the specified observation error, whereas the adaptive prior-scaling procedure acts more gradually; robust cycling requires these adaptive procedures to be
consistent with the numerical stiffness of the reverse ODE.}
\label{fig:adapt2}
\end{figure}

\subsection{High-dimensional surface quasi-geostrophic model}
\label{sec:exp-sqg}

The SQG experiments test whether the Lorenz--96 conclusions persist in a spatially extended,
multiscale turbulent system. The model carries potential temperature $\theta$ on two bounding
surfaces at $z=0$ and $z=H=10$ km over a doubly periodic $20\,000$ km square. Each surface has
$128^2$ grid points (156 km spacing), giving $d=32,768$. The Rossby radius is
$L_R=1000$ km.
Because the circulation induced by a disturbance decays vertically over a depth proportional to its horizontal scale, the two surfaces interact at scales of order $L_R$ and larger, and act nearly independently at smaller scales.
$N_e=16$ ensemble members are used, so the state
dimension exceeds the ensemble size by more than three orders of magnitude. Each realization begins
from a climatological ensemble constructed from a long free model integration. Three independent realizations are reserved for parameter tuning and kept disjoint from the ten evaluation realizations.

\subsection{Linear observations}
\label{sec:sqg_linear}
In this linear observation setting, analyses are performed every 6 h for 100 cycles. At each cycle, 10\% of the grid points on both surfaces are selected
randomly and observed directly with \(\sigma_{\rm obs}=1\) K; the observation
network is redrawn at the next cycle. Verification statistics are computed over
cycles 50--99. After hyperparameter tuning, both methods use a localization
length of \(L=1000\) km.
With only two bounding surfaces, no separate vertical localization is applied, so that localization is a function of horizontal distance alone. LETKF uses RTPS \(=0.5\), whereas CI-EnSF uses
adaptive \(\gamma\) with $\gamma_{\min} = 0.05$, $\gamma_{\max} = 0.5$, and likelihood tempering with \(\kappa=0.025\). The
likelihood tempering is effectively inactive during the evaluation period
because \(\sqrt{r_{\rm eff}}\) has already relaxed to the specified
observation-error standard deviation of 1 K.

Both methods remain stable in all ten independent evaluation realizations.
Their ensemble dispersion is also close to the finite-ensemble consistency
reference,
$
\sqrt{\frac{N_e}{N_e+1}}
=
\sqrt{\frac{16}{17}}
\approx 0.970.
$
The mean Spread/RMSE ratios are 0.96 for CI-EnSF and 1.02 for LETKF,
indicating that both filters maintain ensemble spread commensurate with their realized analysis errors. The principal difference is analysis accuracy. LETKF achieves a mean RMSE of 1.2553 K, compared with 1.5033 K for CI-EnSF, so that the CI-EnSF error is 19\% larger in this linear-observation regime.
Figure~\ref{fig:sqg128} provides complementary spatial and cycling
diagnostics. Starting from a climatological ensemble, both methods rapidly reduce the initial error and subsequently maintain bounded errors throughout the assimilation period. At the final cycle of the representative realization, both analyses recover the dominant large-scale SQG structures. The remaining analysis errors are concentrated primarily along sharp fronts and filamentary features, where small displacement or amplitude errors produce locally large differences from the truth. The error fields produced by the two methods are qualitatively similar, although LETKF exhibits smaller overall error, consistent with the aggregate statistics. The cycling diagnostic further shows that both methods maintain control of the much larger unobserved portion of the state despite directly observing only 10\% of the grid points at each analysis time.

This experiment therefore provides an important high-dimensional
linear-observation baseline. Increasing the state dimension to
\(d=32{,}768\), more than three orders of magnitude larger than the ensemble size, does not by itself create an advantage for score-based transport. When the observation operator is linear and a well-tuned covariance-based update is well matched to the problem, LETKF remains more accurate, while CI-EnSF nevertheless exhibits stable cycling, near-consistent ensemble dispersion, and effective covariance-informed estimate of sparsely observed states.
\begin{table}[h!]
\centering
\small
\caption{Performance of CI-EnSF and LETKF in the high-dimensional SQG experiment with linear observations. RMSE denotes the mean analysis RMSE across realizations, while worst RMSE is the largest realization-mean RMSE among the ten evaluation realizations. Both methods remain stable and near-consistent in spread, while the tuned
LETKF achieves lower analysis error in this linear-observation regime.}
\label{tab:sqg128}
\begin{tabular}{ccccc}
\toprule
Method & Stable realizations & RMSE (K) & Worst RMSE (K) & Spread/RMSE \\
\midrule
CI-EnSF & 10/10 & 1.4671 & 1.6028 & 0.98 \\
LETKF & 10/10 & 1.2553 & 1.3452 & 1.03 \\
\bottomrule
\end{tabular}
\vspace{-0cm}
\end{table}
\begin{figure}[h!]
\centering
\includegraphics[width=0.95\textwidth]{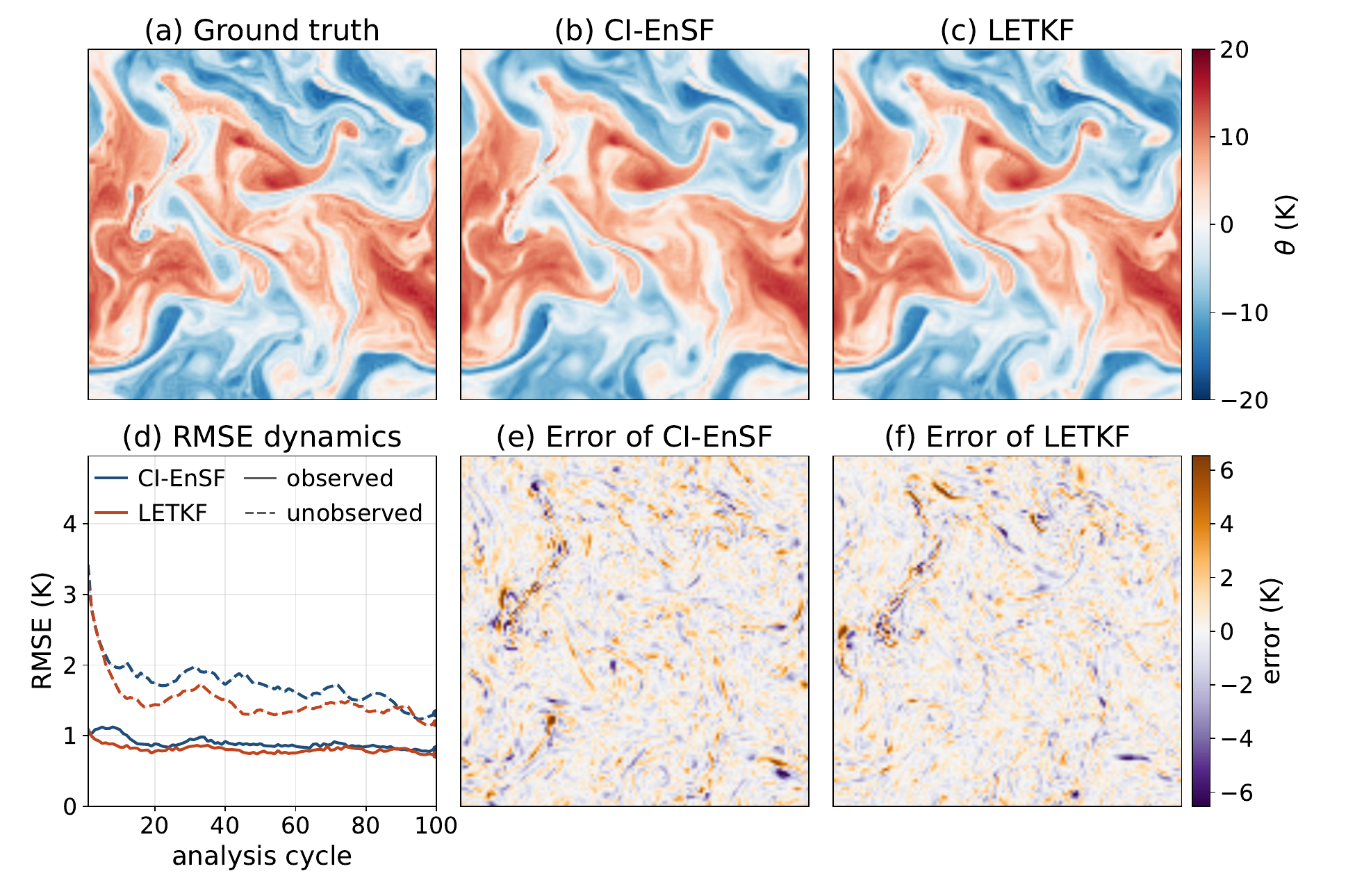}
\vspace{-0.5cm}
\caption{Illustration of CI-EnSF for the SQG model with linear observations.
Panels (a)--(c) show the ground truth, CI-EnSF analysis, and LETKF analysis
at the final cycle of an illustrative evaluation realization,
and panels (e)--(f) show the corresponding analysis-error fields using a
common error color scale. Panel (d) shows the cycling evolution of analysis
RMSE for the directly observed and unobserved state components, with the
evaluation period spanning cycles 50--99.
Both methods rapidly reduce the initial error, remain stable in all ten
evaluation realizations, and maintain control of the much larger unobserved
portion of the state. In this linear-observation regime, however, the tuned
LETKF achieves lower mean analysis RMSE than CI-EnSF,
showing that high dimensionality alone does not create an advantage for
score-based transport.}
\label{fig:sqg128}
\end{figure}

\subsubsection{Nonlinear observations}
\label{sec:exp-sqg128-arctan}

The nonlinear SQG experiment uses the saturating observation operator
\begin{equation}
  h(\theta)
  =
  s\arctan(\theta/s),
  \qquad
  s=2.3~\mathrm{K}.
  \label{eq:sqg_nonlinear_obs}
\end{equation}
Because the observation error is specified in the transformed observation
space, the effective uncertainty expressed in state-space units depends on
the local slope of \(h\). A first-order transformation gives
\begin{equation}
  \sigma_{\rm eff}(\theta)
  =
  \frac{\sigma_{\rm obs}}{|h'(\theta)|}
  =
  \sigma_{\rm obs}
  \left[
    1+\left(\frac{\theta}{s}\right)^2
  \right].
  \label{eq:sqg_sigma_eff}
\end{equation}
If the same value \(\sigma_{\rm obs}=1\)~K as in the linear experiment were
used, the median \(\sigma_{\rm eff}\) along the truth trajectory would be
6.41~K, making the nonlinear observations substantially less informative in
state space. We therefore set \(\sigma_{\rm obs}=0.1559\)~K, for which the
median effective state-space observation error is approximately 1~K. This
normalization prevents the saturation of \(h\) from making the observations
trivially weak. It should not, however, be interpreted as a strictly matched
comparison with the linear SQG experiment, because the assimilation interval
and observation density also differ between the two configurations.

Analyses are performed every 24~h for 150 cycles. At each cycle, 25\% of the
grid points on both surfaces are selected randomly and observed through
Eq.~\eqref{eq:sqg_nonlinear_obs}; the observation network is redrawn at
the next cycle. Verification statistics are computed over cycles 75--149.
Hyperparameter tuning selects \(L=1000\)~km and \(\kappa=0.025\) for
CI-EnSF, and \(L=500\)~km with RTPS \(=0.3\) for LETKF. Table \ref{tab:sensitivity} shows the corresponding
\(4\times6\) sensitivity study which shows that a broader region of stable
parameter choices for CI-EnSF (13 of 24 settings) than for LETKF (6 of 24 settings).
\begin{table}[htbp]
\centering
\caption{Hyperparameter sensitivity comparison. Each entry is the RMSE (K) averaged over the three realizations for tuning.
The notation ``\textrm{---}'' indicates a hyperparameter configuration that overflowed the forecast model on at least one realization; the notation ``$\dagger$'' indicates that the RMSE is above the divergence threshold.}
\label{tab:sensitivity}
\small
\begin{tabular}{ccccccc}
\toprule
\multicolumn{7}{c}{LETKF's hyperparameter tuning chart}\\
\toprule
\diagbox[width=3cm,height=1cm]{L (km)}{RTPS} & $0$ & $0.1$ & $0.3$ & $0.5$ & $0.7$ & $0.9$ \\
\hline
$250$ & $2.867$ & $2.555$ & $2.629$ & --- & --- & --- \\
$500$ & --- & --- & $2.252$ & $2.647$ & --- & --- \\
$1000$ & --- & --- & --- & --- & --- & --- \\
$2000$ & --- & --- & --- & --- & $2.654$ & --- \\
\toprule
\toprule
\multicolumn{7}{c}{CI-EnSF's hyperparameter tuning chart} \\
\toprule
\diagbox[width=3cm,height=1cm]{L (km)}{$\kappa$} & $0$ & $0.0025$ & $0.01$ & $0.025$ & $0.05$ & $0.1$ \\
\midrule
$250$ & $\dagger$ & $\dagger$ & $\dagger$ & $\dagger$ & $\dagger$ & $\dagger$ \\
$500$ & $2.106$ & $2.106$ & $2.103$ & $2.225$ & $2.423$ & $2.781$ \\
$1000$ & $1.753$ & $1.753$ & $1.760$ & $1.738$ & $1.804$ & $1.969$ \\
$2000$ & --- & --- & --- & --- & --- & $2.624$ \\
\bottomrule
\end{tabular}
\end{table}

In contrast to the linear SQG experiment,
Table~\ref{tab:sqg128arctan} shows that CI-EnSF remains stable in all ten independent
evaluation realizations, with a mean RMSE of 1.7603~K and a Spread/RMSE ratio
of 0.97. LETKF remains stable in only six of the ten realizations; over those
stable realizations, its mean RMSE is 2.2191~K and its Spread/RMSE ratio is
0.91. On the six realizations for which both filters remain stable, CI-EnSF
has lower RMSE in every case, with an average reduction of 21.0\% relative to
LETKF. Thus, the advantage of CI-EnSF in this regime appears in both analysis
accuracy and cycling robustness.
Figure~\ref{fig:sqg128arctan} provides spatial context for this difference.
The improvement of CI-EnSF appears not only
after covariance-informed
propagation into unobserved regions, but also
directly at the observed locations where the nonlinear observation operator
enters the analysis. The corresponding error fields show that both methods
retain larger errors near sharp fronts and filamentary structures, but
CI-EnSF produces smaller errors over much of the domain. Together with the linear SQG experiment, these results indicate that
the benefit of CI-EnSF is associated with the nonlinear observation-to-state
relationship rather than with high dimensionality alone.

\begin{table}[htbp]
\centering
\caption{Performance of CI-EnSF and LETKF for the SQG model with
nonlinear observations. RMSE and Spread/RMSE are computed over the realizations that remain
stable through the experiment; the reported range gives the minimum and maximum
realization-mean RMSE among those stable realizations. CI-EnSF remains stable in
all ten evaluation realizations and has lower RMSE, whereas LETKF remains stable
in six of ten realizations.}
\label{tab:sqg128arctan}
\small
\begin{tabular}{ccccc}
\toprule
Method & Stable realizations & RMSE (K) & RMSE range (K) & Spread/RMSE \\
\midrule
CI-EnSF &  $10/10$ &
  $1.7603 $ &  $1.6547$--$1.8657$ & $0.97$ \\
LETKF &  $6/10$ &
  $2.2191 $ & $2.0065$--$2.3961$ & $0.91$ \\
\bottomrule
\end{tabular}
\end{table}

\begin{figure}[htbp]
\centering
\includegraphics[width=\textwidth]{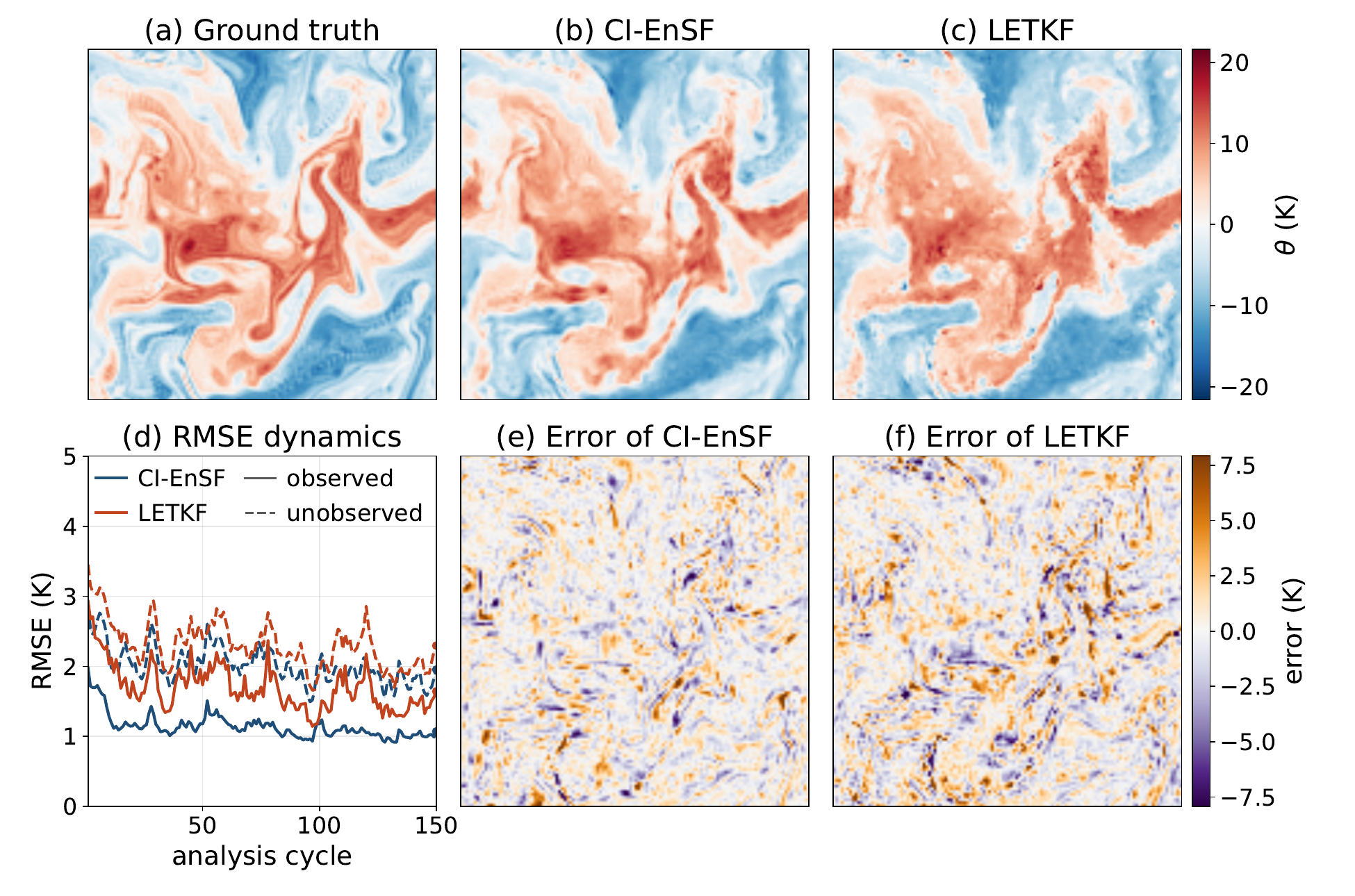}
\vspace{-0.8cm}
\caption{Illustration of CI-EnSF for the SQG model with nonlinear observations.
Panels (a)--(c) show the ground truth, CI-EnSF analysis, and LETKF analysis
at the final cycle of an illustrative evaluation realization, for which both methods remain stable; panels (e)--(f) show
the corresponding analysis-error fields using a common error color scale.
Panel (d) shows the cycling evolution of analysis RMSE for the directly
observed and unobserved state components, with the evaluation period spanning
cycles 75--149. Both methods reduce the initial
climatological error, but CI-EnSF maintains lower error at both observed and
unobserved components during the evaluation period. The spatial error fields
show the same behavior, with smaller CI-EnSF errors over much of the domain, including regions associated with sharp fronts and filamentary structures.}
\label{fig:sqg128arctan}
\end{figure}

\begin{figure}
    \centering    \includegraphics[width=\linewidth]{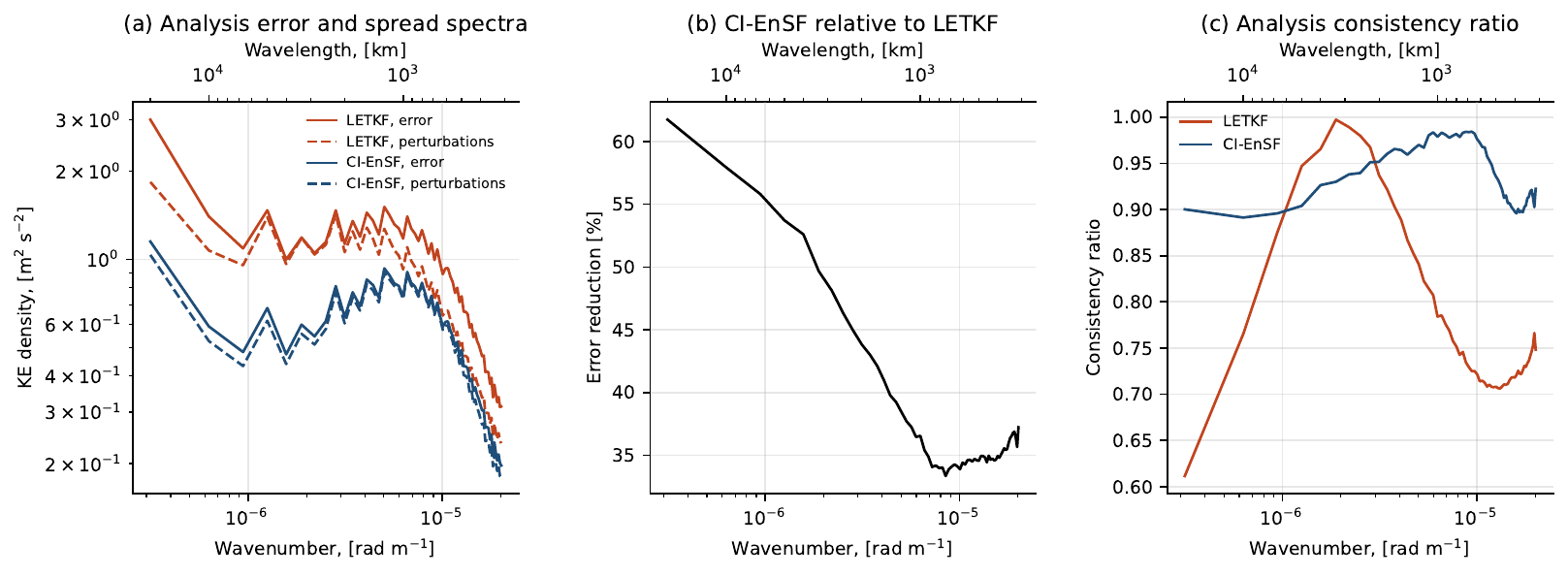}
    \vspace{-0.8cm}
    \caption{Scale decomposition of the analysis error. (a) Kinetic energy of the analysis-mean error (solid) and of
the analysis perturbations (dashed) for each method. The perturbation spectrum is the unbiased ensemble
variance. (b) $100\,(E_{\rm LETKF} - E_{\rm
CI-EnSF})/E_{\rm LETKF}$: the fraction of the LETKF's error energy that CI-EnSF removes, shell
by shell. (c) Consistency ratio, the dashed curve of (a) divided by the solid one; for a
reliable ensemble of $N_e$ members its expected value is $N_e/(N_e+1) = 0.941$.  Spectra are averaged over cycles $75$--$149$ and over the six common stable evaluation realizations.}
\label{fig:spectra}
\end{figure}

Figure~\ref{fig:spectra} complements the bulk RMSE diagnostics by showing how the analysis error and ensemble dispersion vary with spatial scale. CI-EnSF has lower analysis-error energy than LETKF across the resolved wavenumber range, and the relative-error reduction in Figure~\ref{fig:spectra}b remains positive at all scales, indicating that its advantage is not confined to a narrow spectral band. The largest reductions occur at the largest resolved scales, which may partly reflect the different localization lengths selected for the two methods (\(L=1000\)~km for CI-EnSF and \(L=500\)~km for LETKF), although the figure does not isolate localization from differences in the analysis update itself. Figure~\ref{fig:spectra}c further shows that the LETKF perturbation-to-error ratio is strongly scale dependent, whereas the CI-EnSF ratio is more uniform and remains closer to the finite-ensemble consistency reference \(N_e/(N_e+1)=16/17\approx0.941\) over most resolved scales. Thus, CI-EnSF exhibits both lower spectral error and more uniform scale-dependent ensemble reliability in this nonlinear SQG experiment.

\section{Conclusion}
\label{sec:con}
We introduced CI-EnSF, a covariance-informed ensemble score filter that embeds localized
forecast-error covariance geometry directly into score-based transport. The resulting
covariance-shaped diffusion is equivalent to isotropic diffusion in control space, while its
member-conditioned transition kernel yields an analytical prior-score contribution in which the
covariance operator cancels from the dominant prior transport but remains active in the likelihood
transport. This preserves the covariance-mediated pathway by which sparse observations influence
unobserved state components while extending the analysis from a Gaussian covariance update to
nonlinear distributional transport. The numerical experiments show a clear regime dependence.
For linear observations, well-tuned LETKF remains more accurate, while CI-EnSF retains stable
cycling and consistent information propagation. Under nonlinear observations and climatological
initialization, however, CI-EnSF achieves lower analysis error and greater robustness in both
Lorenz--96 and the \(d=32{,}768\) SQG system. In the nonlinear SQG experiment, the improvement
extends across the resolved spectrum and is accompanied by more uniform scale-dependent ensemble
reliability. These results indicate that the main benefit of CI-EnSF is not high dimensionality by
itself, but the combination of nonlinear likelihood geometry with covariance-informed
distributional transport.

Several directions remain for future work. The current formulation uses an approximate
likelihood-score construction and an explicitly integrated reverse ODE, so improved posterior-score
approximations, higher-order or adaptive time integration, and implicit solvers may further reduce
numerical stiffness and computational cost. The innovation-based prior-scaling controller could
also be compared systematically with direct innovation-based covariance-scale estimators, and the
interaction among localization, adaptive prior scaling, and likelihood tempering warrants a more
complete theoretical analysis. More broadly, CI-EnSF should be tested with multivariate,
heterogeneous observing systems, hybrid or physics-informed covariance models, and realistic
atmospheric prediction systems. Such extensions would clarify whether covariance-informed
score transport can provide a practical route for incorporating mature data-assimilation
covariance infrastructure into non-Gaussian ensemble filtering at operational scales.

\section*{Acknowledgments}
This research was supported by the U.S. Department of Energy (DOE), Office of Science, Office of Advanced Scientific Computing Research (ASCR), Applied Mathematics Program under contracts ERKJ443 and ERKJ388.
Feng Bao acknowledges support from the U.S. National Science Foundation (NSF) under Grants DMS-2142672 and DMS-2608431, and from the U.S. Department of Energy, Office of Science, Advanced Scientific Computing Research (ASCR) Applied Mathematics Program, under Grants DE-SC0025412 and DE-SC0026871. Hristo Chipilski acknowledges support from Florida State University’s CRC Seed Grant (047080). Peter Jan van Leeuwen acknowledges support from the National Oceanic and Atmospheric Administration (NOAA) project CADRE (NA24OARX459C0002-T1-01) and the Department of Defense (DOD) project RAM-HORNS (N00014-24-2017).

\section*{Open Research Statement}
The numerical results for CI-EnSF and the reference methods can be re-produced using the code available at \url{https://github.com/gnzhang09/Covariance-Informed-EnSF} and the data available at \url{https://zenodo.org/records/23576865}.

\section*{Conflict of Interest}
The authors declare there are no conflicts of interest for this manuscript.

\bibliography{CVT_EnSF_references}

@article{BaoEtAl2024ScoreFilter,
  author  = {Bao, F. and Zhang, Z. and Zhang, G.},
  title   = {A score-based filter for nonlinear data assimilation},
  journal = {Journal of Computational Physics},
  volume  = {514},
  pages   = {113207},
  year    = {2024},
  doi     = {10.1016/j.jcp.2024.113207}
}

@article{ElGharamti2018,
  author  = {El Gharamti, Mohamad},
  title   = {Enhanced Adaptive Inflation Algorithm for Ensemble Filters},
  journal = {Monthly Weather Review},
  volume  = {146},
  number  = {2},
  pages   = {623--640},
  year    = {2018},
  doi     = {10.1175/MWR-D-17-0187.1}
}

@article{Anderson2009,
  author  = {Anderson, Jeffrey L.},
  title   = {Spatially and temporally varying adaptive covariance inflation for ensemble filters},
  journal = {Tellus A: Dynamic Meteorology and Oceanography},
  volume  = {61},
  number  = {1},
  pages   = {72--83},
  year    = {2009},
  doi     = {10.1111/j.1600-0870.2008.00361.x}
}

@article{XiongEtAl2026ObservationNetworks,
  author  = {Xiong, Z. and Liang, S. and Bao, F. and Zhang, G. and Chipilski, H.},
  title   = {Robustness of the ensemble score filters to the type of assimilated observation networks},
  journal = {Atmospheric Science Letters},
  volume  = {27},
  number  = {1},
  pages   = {e70004},
  year    = {2026},
  doi     = {10.1002/asl.70004}
}

@article{BaoEtAl2026UnitedFilter,
  author  = {Bao, F. and Zhang, Z. and Zhang, G.},
  title   = {United filter for jointly estimating state and parameters of stochastic dynamical systems},
  journal = {Communications in Computational Physics},
  volume  = {39},
  number  = {3},
  pages   = {747--774},
  year    = {2026},
  doi     = {10.4208/cicp.OA-2024-0009}
}

@article{HuynhEtAl2026AdaptiveLearning,
  author  = {Huynh, T. and Fajardo, R. and Zhang, G. and Ju, L. and Bao, F.},
  title   = {A score-based diffusion model approach for adaptive learning of stochastic partial differential equation solutions},
  journal = {Journal of Computational Physics},
  volume  = {556},
  pages   = {114814},
  year    = {2026},
  doi     = {10.1016/j.jcp.2026.114814}
}

@article{Yeh_vanLeeuwen_2026a,
author = {Yeh, Hao-Lun and {van Leeuwen}, Peter Jan},
title = {Unbiased fully nonlinear data assimilation. Part I: The Stochastic Particle Flow Filter},
journal = {Quarterly Journal of the Royal Meteorological Society},
pages = {Accepted},
year = {2026}
}

@article{Yeh_vanLeeuwen_2026b,
author = {Yeh, Hao-Lun and {van Leeuwen}, Peter Jan},
title = {Unbiased fully nonlinear data assimilation. Part II: The Stochastic Particle Flow Smoother},
journal = {Quarterly Journal of the Royal Meteorological Society},
pages = {Accepted},
year = {2026}
}

@article{Hu_vanLeeuwen_2021,
author = {Hu, Chih-Chi and {van Leeuwen}, Peter Jan},
title = {A particle flow filter for high-dimensional system applications},
journal = {Quarterly Journal of the Royal Meteorological Society},
volume = {147},
number = {737},
pages = {2352-2374},
doi = {https://doi.org/10.1002/qj.4028},
year = {2021}
}

@article{Lorenc1986,
  author  = {Lorenc, A. C.},
  title   = {Analysis methods for numerical weather prediction},
  journal = {Quarterly Journal of the Royal Meteorological Society},
  year    = {1986},
  volume  = {112},
  number  = {474},
  pages   = {1177--1194},
  doi     = {10.1002/qj.49711247414}
}

@article{Bannister2008a,
  author  = {Bannister, R. N.},
  title   = {A review of forecast error covariance statistics in atmospheric variational data assimilation. I: Characteristics and measurements of forecast error covariances},
  journal = {Quarterly Journal of the Royal Meteorological Society},
  year    = {2008},
  volume  = {134},
  number  = {637},
  pages   = {1951--1970},
  doi     = {10.1002/qj.339}
}

@article{Bannister2008b,
  author  = {Bannister, R. N.},
  title   = {A review of forecast error covariance statistics in atmospheric variational data assimilation. II: Modelling the forecast error covariance statistics},
  journal = {Quarterly Journal of the Royal Meteorological Society},
  year    = {2008},
  volume  = {134},
  number  = {637},
  pages   = {1971--1996},
  doi     = {10.1002/qj.340}
}

@article{HoutekamerMitchell1998,
  author  = {Houtekamer, P. L. and Mitchell, H. L.},
  title   = {Data assimilation using an ensemble Kalman filter technique},
  journal = {Monthly Weather Review},
  year    = {1998},
  volume  = {126},
  number  = {3},
  pages   = {796--811},
  doi     = {10.1175/1520-0493(1998)126<0796:DAUAEK>2.0.CO;2}
}

@article{HamillSnyder2000,
  author  = {Hamill, T. M. and Snyder, C.},
  title   = {A hybrid ensemble Kalman filter--3D variational analysis scheme},
  journal = {Monthly Weather Review},
  year    = {2000},
  volume  = {128},
  number  = {8},
  pages   = {2905--2919},
  doi     = {10.1175/1520-0493(2000)128<2905:AHEKFV>2.0.CO;2}
}

@article{WhitakerHamill2002,
  author  = {Whitaker, J. S. and Hamill, T. M.},
  title   = {Ensemble data assimilation without perturbed observations},
  journal = {Monthly Weather Review},
  year    = {2002},
  volume  = {130},
  number  = {7},
  pages   = {1913--1924},
  doi     = {10.1175/1520-0493(2002)130<1913:EDAWPO>2.0.CO;2}
}

@article{Hunt2007,
  author  = {Hunt, B. R. and Kostelich, E. J. and Szunyogh, I.},
  title   = {Efficient data assimilation for spatiotemporal chaos: A local ensemble transform Kalman filter},
  journal = {Physica D: Nonlinear Phenomena},
  year    = {2007},
  volume  = {230},
  number  = {1--2},
  pages   = {112--126},
  doi     = {10.1016/j.physd.2006.11.008}
}

@article{Bao2024,
  author  = {Bao, Feng and Zhang, Zezhong and Zhang, Guannan},
  title   = {An ensemble score filter for tracking high-dimensional nonlinear dynamical systems},
  journal = {Computer Methods in Applied Mechanics and Engineering},
  year    = {2024},
  volume  = {432},
  pages   = {117447},
  doi     = {10.1016/j.cma.2024.117447}
}

@article{Liang2025Inpainting,
  author  = {Liang, Siming and Tran, Hoang and Bao, Feng and Chipilski, Hristo G. and van Leeuwen, Peter Jan and Zhang, Guannan},
  title   = {Ensemble score filter with image inpainting for data assimilation in tracking surface quasi-geostrophic dynamics with partial observations},
  journal = {SIAM Journal on Applied Mathematics},
  year    = {2026},
  volume = {86},
  issue = {5},
  pages = {2379-2404},
  doi ={10.1137/25M1730053}
}

@article{Xiong2026EnSFLR,
  author  = {Xiong, Zixiang and Bao, Feng and Chipilski, Hristo G. and Liang, Siming and Tang, Jingqiao and Zhang, Guannan},
  title   = {A Two-Step Ensemble Score Filter for Data Assimilation in Partially Observed Systems},
  journal = {arXiv preprint arXiv:2606.28264},
  year    = {2026},
  eprint  = {2606.28264},
  archivePrefix = {arXiv}
}

@article{zhang2026iensf,
  author  = {Zhang, Z. and Bao, F. and Zhang, G.},
  title   = {{IEnSF}: Iterative Ensemble Score Filter for Reducing Error in Posterior Score Estimation in Nonlinear Data Assimilation},
  journal = {Journal of Computational Physics},
  volume  = {568},
  pages   = {115357},
  year    = {2026},
  doi     = {10.1016/j.jcp.2026.115357}
}

@article{DeeDaSilva1999,
  author  = {Dee, Dick P. and da Silva, Arlindo M.},
  title   = {Maximum-Likelihood Estimation of Forecast and Observation Error Covariance Parameters. Part I: Methodology},
  journal = {Monthly Weather Review},
  year    = {1999},
  volume  = {127},
  number  = {8},
  pages   = {1822--1834}
}

@article{DesroziersEtAl2005,
  author  = {Desroziers, G{\'e}rald and Berre, Lo{\"\i}k and Chapnik, Boris and Poli, Paul},
  title   = {Diagnosis of Observation, Background and Analysis-Error Statistics in Observation Space},
  journal = {Quarterly Journal of the Royal Meteorological Society},
  year    = {2005},
  volume  = {131},
  number  = {613},
  pages   = {3385--3396}
}

@article{Anderson2007,
  author  = {Anderson, Jeffrey L.},
  title   = {An Adaptive Covariance Inflation Error Correction Algorithm for Ensemble Filters},
  journal = {Tellus A: Dynamic Meteorology and Oceanography},
  year    = {2007},
  volume  = {59},
  number  = {2},
  pages   = {210--224}
}

@article{SakovEtAl2012,
  author  = {Sakov, Pavel and Oliver, Dean S. and Bertino, Laurent},
  title   = {An Iterative {EnKF} for Strongly Nonlinear Systems},
  journal = {Monthly Weather Review},
  year    = {2012},
  volume  = {140},
  number  = {6},
  pages   = {1988--2004},
  doi     = {10.1175/MWR-D-11-00176.1}
}

@article{EmerickReynolds2013,
  author  = {Emerick, Alexandre A. and Reynolds, Albert C.},
  title   = {Ensemble Smoother with Multiple Data Assimilation},
  journal = {Computers \& Geosciences},
  year    = {2013},
  volume  = {55},
  pages   = {3--15},
  doi     = {10.1016/j.cageo.2012.03.011}
}

@article{VanLeeuwenEtAl2019,
  author  = {van Leeuwen, Peter Jan and K{\"u}nsch, Hans R. and
             Nerger, Lars and Potthast, Roland and Reich, Sebastian},
  title   = {Particle Filters for High-Dimensional Geoscience Applications:
             A Review},
  journal = {Quarterly Journal of the Royal Meteorological Society},
  year    = {2019},
  volume  = {145},
  number  = {723},
  pages   = {2335--2365},
  doi     = {10.1002/qj.3551}
}

@article{KurosawaPoterjoy2021,
  author  = {Kurosawa, Kenta and Poterjoy, Jonathan},
  title   = {Data Assimilation Challenges Posed by Nonlinear Operators:
             A Comparative Study of Ensemble and Variational Filters and
             Smoothers},
  journal = {Monthly Weather Review},
  year    = {2021},
  volume  = {149},
  number  = {7},
  pages   = {2369--2389},
  doi     = {10.1175/MWR-D-20-0368.1}
}

@article{Chipilski2025,
  author  = {Chipilski, Hristo G.},
  title   = {Exact Nonlinear State Estimation},
  journal = {Journal of the Atmospheric Sciences},
  year    = {2025},
  volume  = {82},
  number  = {4},
  pages   = {809--827},
  doi     = {10.1175/JAS-D-24-0171.1}
}

@article{GaspariCohn1999,
  author  = {Gaspari, Gregory and Cohn, Stephen E.},
  title   = {Construction of correlation functions in two and three dimensions},
  journal = {Quarterly Journal of the Royal Meteorological Society},
  year    = {1999},
  volume  = {125},
  number  = {554},
  pages   = {723--757},
  doi     = {10.1002/qj.49712555417}
}

@article{HamillEtAl2001,
  author  = {Hamill, Thomas M. and Whitaker, Jeffrey S. and Snyder, Chris},
  title   = {Distance-Dependent Filtering of Background Error Covariance Estimates in an Ensemble {Kalman} Filter},
  journal = {Monthly Weather Review},
  year    = {2001},
  volume  = {129},
  number  = {11},
  pages   = {2776--2790},
  doi     = {10.1175/1520-0493(2001)129<2776:DDFOBE>2.0.CO;2}
}

@article{HoutekamerZhang2016,
  author  = {Houtekamer, P. L. and Zhang, Fuqing},
  title   = {Review of the Ensemble {Kalman} Filter for Atmospheric Data Assimilation},
  journal = {Monthly Weather Review},
  year    = {2016},
  volume  = {144},
  number  = {12},
  pages   = {4489--4532},
  doi     = {10.1175/MWR-D-15-0440.1}
}

@article{Lorenc2003,
  author  = {Lorenc, Andrew C.},
  title   = {The potential of the ensemble {Kalman} filter for {NWP}---a comparison with {4D-Var}},
  journal = {Quarterly Journal of the Royal Meteorological Society},
  year    = {2003},
  volume  = {129},
  number  = {595},
  pages   = {3183--3203},
  doi     = {10.1256/qj.02.132}
}

@article{Buehner2005,
  author  = {Buehner, Mark},
  title   = {Ensemble-derived stationary and flow-dependent background-error covariances: Evaluation in a quasi-operational {NWP} setting},
  journal = {Quarterly Journal of the Royal Meteorological Society},
  year    = {2005},
  volume  = {131},
  number  = {607},
  pages   = {1013--1043},
  doi     = {10.1256/qj.04.15}
}

@article{BocquetEtAl2010,
  author  = {Bocquet, Marc and Pires, Carlos A. and Wu, Lin},
  title   = {Beyond {Gaussian} Statistical Modeling in Geophysical Data Assimilation},
  journal = {Monthly Weather Review},
  year    = {2010},
  volume  = {138},
  number  = {8},
  pages   = {2997--3023},
  doi     = {10.1175/2010MWR3164.1}
}

@book{KalnayEtAl2024,
  author    = {Kalnay, Eugenia and Mote, Safa and Da, Cheng},
  title     = {Earth System Modeling, Data Assimilation and Predictability:
               Atmosphere, Oceans, Land and Human Systems},
  edition   = {2},
  publisher = {Cambridge University Press},
  year      = {2024},
  doi       = {10.1017/9780511920608},
  isbn      = {9781107009004}
}

@book{AschEtAl2016,
  author    = {Asch, Mark and Bocquet, Marc and Nodet, Ma{\"e}lle},
  title     = {Data Assimilation: Methods, Algorithms, and Applications},
  series    = {Fundamentals of Algorithms},
  volume    = {11},
  publisher = {Society for Industrial and Applied Mathematics},
  address   = {Philadelphia, PA},
  year      = {2016},
  doi       = {10.1137/1.9781611974546},
  isbn      = {9781611974539}
}

@book{EvensenEtAl2022,
  author    = {Evensen, Geir and Vossepoel, Femke C. and van Leeuwen, Peter Jan},
  title     = {Data Assimilation Fundamentals: A Unified Formulation of the State and Parameter Estimation Problem},
  series    = {Springer Textbooks in Earth Sciences, Geography and Environment},
  publisher = {Springer},
  address   = {Cham},
  year      = {2022},
  doi       = {10.1007/978-3-030-96709-3},
  isbn      = {9783030967086}
}

\begin{appendices}
\renewcommand{\theequation}{\thesection\arabic{equation}}
\counterwithin*{equation}{section}

\section{The proof of Proposition \ref{prop2}}\label{sec:app1}

\begin{proof}
From the conditional transition law, we have
\[
    \zb_t \mid \zb_0
    \sim
    \mathcal{N}\!\left((1-t)\zb_0,\; t\Cmat\right),
\]
which is equivalent to
$
    \zb_t = (1-t)\zb_0 + \eta_t,
$
where
$
    \eta_t \sim \mathcal{N}(0,t\Cmat)
$
is independent of $\zb_0$ conditional on $\ub_i$. Since
\[
    \zb_0 \mid \ub_i
    \sim
    \mathcal{N}(\ub_i,\gamma \Cmat),
\]
the pair $(\zb_0,\eta_t)$ is jointly Gaussian conditional on $\ub_i$.
Therefore $\zb_t$, being an affine transformation of jointly Gaussian
random variables, is also Gaussian conditional on $\ub_i$. It remains
to compute its conditional mean and covariance.

By the law of total expectation, we have
\begin{align*}
    \mathbb{E}_{\zb_t|\ub_i}[\zb_t \mid \ub_i]
    &=
    \mathbb{E}_{\zb_0|\ub_i}
    \left[
        \mathbb{E}_{\zb_t|\zb_0,\ub_i}[\zb_t \mid \zb_0,\ub_i]
        \,\middle|\, \ub_i
    \right] \\
    &=
    \mathbb{E}_{\zb_0|\ub_i}
    \left[
        (1-t)\zb_0
        \,\middle|\, \ub_i
    \right] \\
    &=
    (1-t)\mathbb{E}_{\zb_0|\ub_i}[\zb_0 \mid \ub_i] \\
    &=
    (1-t)\ub_i.
\end{align*}
The conditional covariance follows from the law of total covariance:
\begin{align*}
    \operatorname{Cov}(\zb_t \mid \ub_i)
    &=
    \mathbb{E}_{\zb_0|\ub_i}
    \left[
        \operatorname{Cov}(\zb_t \mid \zb_0,\ub_i)
        \,\middle|\, \ub_i
    \right]
    +
    \operatorname{Cov}_{\zb_0|\ub_i}
    \left(
        \mathbb{E}[\zb_t \mid \zb_0,\ub_i]
        \,\middle|\, \ub_i
    \right).
\end{align*}
The first term is
\[
    \mathbb{E}_{\zb_0|\ub_i}
    \left[
        \operatorname{Cov}(\zb_t \mid \zb_0,\ub_i)
        \,\middle|\, \ub_i
    \right]
    =
    t\Cmat,
\]
because the conditional covariance of the forward transition does not
depend on $\zb_0$. For the second term, we have
\begin{align*}
    \operatorname{Cov}_{\zb_0|\ub_i}
    \left(
        \mathbb{E}[\zb_t \mid \zb_0,\ub_i]
        \,\middle|\, \ub_i
    \right)
    =
    \operatorname{Cov}
    \left(
        (1-t)\zb_0
        \,\middle|\, \ub_i
    \right)
    =
    (1-t)^2
    \operatorname{Cov}(\zb_0 \mid \ub_i)
    =
    (1-t)^2 \gamma \Cmat .
\end{align*}
Consequently,
\begin{align*}
    \operatorname{Cov}(\zb_t \mid \ub_i)
    =
    t\Cmat + (1-t)^2\gamma \Cmat
    =
    \left[t+(1-t)^2\gamma\right]\Cmat
    &=
    c(t)\Cmat.
\end{align*}

Combining the Gaussianity established above with the conditional mean
and covariance yields
\[
    \zb_t \mid \ub_i
    \sim
    \mathcal{N}
    \left(
        (1-t)\ub_i,\;
        \left[(1-t)^2\gamma+t\right]\Cmat
    \right),
\]
which proves the result.
\end{proof}

\section{Continuous-time limit of the Kalman update}
\label{app:kalmanbucy}

This appendix derives the continuous-time limit used in
Section~\ref{sec:classicalrelation}. The purpose is to show how the
covariance-weighted observation-to-state operator
\(\Bmat^f\Hmat^{\top}\Rmat_t^{-1}\)
arises from a sequence of discrete Kalman updates as the observation interval
approaches zero.
Consider the continuous-time observation process
\begin{equation}
  d{\bf Y}_t
  =
  \Hmat_t\xb_t\,dt
  +
  \Vmat_t\,d{\bf W}_t^{\rm obs},
  \label{eq:cont_meas}
\end{equation}
where \({\bf W}_t^{\rm obs}\) is a standard observation-space Wiener process.
The continuous-time observation-noise covariance-rate matrix is defined by
\begin{equation}
  \Rmat_t
  =
  \Vmat_t\Vmat_t^{\top}.
  \label{eq:cont_R}
\end{equation}

To connect the continuous observation process with the usual discrete-time
data-assimilation model, consider an observation interval
\(\Delta t=t_{k+1}-t_k\). The interval-averaged observation is defined by
\begin{equation}
  \yb_k
  =
  \frac{{\bf Y}_{t_{k+1}}-{\bf Y}_{t_k}}{\Delta t}.
  \label{eq:discrete_average}
\end{equation}
Integration of the observation process in Eq.~\eqref{eq:cont_meas} over the
interval gives
\begin{equation}
  \yb_k
  =
  \frac{1}{\Delta t}
  \int_{t_k}^{t_{k+1}}
  \Hmat_t\xb_t\,dt
  +
  \eb_k,
  \label{eq:discrete_from_cont}
\end{equation}
where the corresponding discrete observation-noise term is
\begin{equation}
  \eb_k
  =
  \frac{1}{\Delta t}
  \int_{t_k}^{t_{k+1}}
  \Vmat_t\,d{\bf W}_t^{\rm obs}.
  \label{eq:discrete_noise}
\end{equation}
When \(\Hmat_t\) and \(\xb_t\) vary negligibly over an infinitesimal interval,
the deterministic part of Eq.~\eqref{eq:discrete_from_cont} reduces to
\(\Hmat_k\xb_k\), giving the standard discrete observation model
\begin{equation}
  \yb_k
  =
  \Hmat_k\xb_k
  +
  \eb_k.
  \label{eq:discrete_meas}
\end{equation}

The discrete observation-error covariance matrix is defined by
\begin{equation}
  \Rmat_d
  =
  \mathbb{E}
  \left[
    \eb_k\eb_k^{\top}
  \right].
  \label{eq:Rd_definition}
\end{equation}
Using the It\^o isometry together with the noise representation in
Eq.~\eqref{eq:discrete_noise}, the covariance becomes
\begin{equation}
  \Rmat_d
  =
  \frac{1}{(\Delta t)^2}
  \int_{t_k}^{t_{k+1}}
  \Vmat_t\Vmat_t^{\top}\,dt
  =
  \frac{1}{(\Delta t)^2}
  \int_{t_k}^{t_{k+1}}
  \Rmat_t\,dt.
  \label{eq:Rd_integral}
\end{equation}
If the covariance-rate matrix varies negligibly over the infinitesimal
interval, the discrete covariance satisfies
\begin{equation}
  \Rmat_d
  =
  \frac{\Rmat_t}{\Delta t}
  +
  o\!\left(\frac{1}{\Delta t}\right).
  \label{eq:Rd_scaling}
\end{equation}
Thus, the observation-error covariance of the interval-averaged observation
increases as \(1/\Delta t\) as the observation interval decreases.

For a forecast covariance \(\Bmat^f\), the discrete Kalman gain associated
with the observation model in Eq.~\eqref{eq:discrete_meas} is
\begin{equation}
  \Kmat_{\rm discrete}
  =
  \Bmat^f\Hmat^{\top}
  \left(
    \Hmat\Bmat^f\Hmat^{\top}
    +
    \Rmat_d
  \right)^{-1}.
  \label{eq:K_discrete_app}
\end{equation}
Substitution of the covariance scaling in Eq.~\eqref{eq:Rd_scaling} gives,
to leading order in \(\Delta t\),
\begin{equation}
  \Kmat_{\rm discrete}
  =
  \Delta t\,
  \Bmat^f\Hmat^{\top}
  \left(
    \Delta t\,
    \Hmat\Bmat^f\Hmat^{\top}
    +
    \Rmat_t
  \right)^{-1}.
  \label{eq:K_scaled_app}
\end{equation}
The Kalman gain per unit physical time is therefore
\begin{equation}
  \frac{\Kmat_{\rm discrete}}{\Delta t}
  =
  \Bmat^f\Hmat^{\top}
  \left(
    \Delta t\,
    \Hmat\Bmat^f\Hmat^{\top}
    +
    \Rmat_t
  \right)^{-1}.
  \label{eq:K_rate_app}
\end{equation}
The continuous-time limit of this gain rate is
\begin{equation}
  \Kmat_t
  =
  \lim_{\Delta t\rightarrow0}
  \frac{\Kmat_{\rm discrete}}{\Delta t}
  =
  \Bmat^f\Hmat^{\top}\Rmat_t^{-1}.
  \label{eq:K_continuous_app}
\end{equation}

The limiting gain in Eq.~\eqref{eq:K_continuous_app} has the same
covariance-weighted observation-to-state geometry as the instantaneous
CI-EnSF likelihood velocity in Eq.~\eqref{eq:cvtinstant}. In particular,
the factor \(\Bmat^f\Hmat^{\top}\) provides the pathway through which
observation-space information is transferred to the model state, including
unobserved components coupled through forecast-error cross-covariances.

\end{appendices}

\end{document}